\definecolor{Bordeaux}{rgb}{0.545, 0.137, 0.137}
\definecolor{BleuGris}{rgb}{0.212, 0.392, 0.545}
\definecolor{Chocolat}{rgb}{0.36, 0.2, 0.09}
\definecolor{BleuTresFonce}{rgb}{0.215, 0.215, 0.36}
\DeclareSymbolFont{rsfscript}{OMS}{rsfs}{m}{n}
\DeclareSymbolFontAlphabet{\mathrsfs}{rsfscript}
\DeclareFontFamily{OMS}{rsfs}{\skewchar\font'177}
\DeclareFontShape{OMS}{rsfs}{m}{n}{%
      <5> rsfs5
      <6> <7> rsfs7
      <8> <9> <10> rsfs10
      <10.95> <12> <14.4> <17.28> <20.74> <24.88> rsfs10
      }{}
\theoremstyle{plain}
\newtheorem{theorem}{Theorem}[section]
\newtheorem*{inttheorem}{Theorem}
\newtheorem{lemma}{Lemma}[section]
\newtheorem{proposition}{Proposition}[section]
\newtheorem{corollary}{Corollary}[section]
\theoremstyle{definition}
\newtheorem{definition}{Definition}[section]
\theoremstyle{remark}
\newtheorem{remark}{Remark}[section]
\DeclareMathOperator{\Ord}{\sf Ord}
\DeclareMathOperator{\Vect}{\sf Vect}
\DeclareMathOperator{\Tw}{Tw}
\DeclareMathOperator{\Kos}{Kos}
\DeclareMathOperator{\Hom}{Hom}
\DeclareMathOperator{\End}{End}
\DeclareMathOperator{\Tor}{Tor}
\DeclareMathOperator{\Ext}{Ext}
\DeclareMathOperator{\id}{id}
\DeclareMathOperator{\lt}{lt}
\def\A{\mathrsfs{A}}
\def\BB{\mathrm{B}}
\def\T{\mathrsfs{T}}
\def\I{\mathrsfs{I}}
\def\P{\mathrsfs{P}}
\def\Q{\mathrsfs{Q}}
\def\NA{\mathrsfs{N\!\!A}}
\def\KK{\mathbb{K}}
\newcommand{\ac}{\scriptstyle \text{\rm !`}}
\title{Higher Koszul duality for associative algebras}
\author{Vladimir Dotsenko}
\address{School of Mathematics, Trinity College, Dublin 2, Ireland}
\email{vdots@maths.tcd.ie}
\author{Bruno Vallette}
\address{Laboratoire J.A.Dieudonn\'e, Universit\'e de Nice Sophia-Antipolis, Parc Valrose, 06108 Nice Cedex 02, France}
\email{brunov@unice.fr}
\begin{document}

\begin{abstract}
We present a unifying framework for the key concepts and results of higher Koszul duality theory for $N$-homogeneous algebras: the Koszul complex, the candidate for the space of syzygies, and the higher operations on the Yoneda algebra. 
We give a universal description of the Koszul dual algebra under a new  algebraic structure. For that we introduce a general notion: Gr\"obner bases for algebras over non-symmetric operads. 
\end{abstract}

\maketitle

\tableofcontents

\section*{Introduction}
The Koszul duality theory originates from the work of S. Priddy \cite{Priddy70}, inspired by ideas of Koszul \cite{Koszul50}. Priddy's theory describes ``small'' resolutions for associative algebras. To compute derived functors, like the $\Tor$ and the $\Ext$ functors, one needs projective resolutions of modules. There always exist such resolutions, which are built functorially from the bar construction, for instance. This functoriality has a price: such a resolution is pretty big. For a given algebra, one can find a (generally) smaller resolution by describing step by step an economic set of generators, called the syzygies. This inductive process is now called the Koszul--Tate resolution after \cite{Koszul50, Tate57}. The same type of process allows one to build a model for a given algebra~$A$, that is a differential graded free algebra whose homology is isomorphic to~$A$. Such a model, according to the general results of homotopical algebra~\cite{Quillen67}, can be used to compute the Quillen homology $H^Q_\bullet(A)$, for instance.

The original Koszul duality theory developed by Priddy starts from an algebra~$A$ given with a quadratic presentation $(V, R\subset V^{\otimes 2})$ and produces a quadratic Koszul dual coalgebra~$A^{\ac}$ generated by the same presentation. Under this algebraic structure, it produces, at once, a good candidate for the space of syzygies. All the three graded vector spaces $\Tor^A_\bullet(\KK,\KK)$, $\Ext^\bullet_A(\KK,\KK)$, $H^Q_\bullet(A)$ can be expressed naturally in terms of~$A^{\ac}$ by re-grading or passing to the graded dual $A^!:=(A^{\ac})^*$. The algebra $A^!$ admits a simple presentation as $T(V^*)/(R^\perp)$. The relationship between the differentials in the corresponding resolutions is more subtle.  

The higher case, when a given algebra~$A$ has a homogeneous presentation $(V, R\subset V^{\otimes N})$ of weight~$N>2$,  has been first studied by R. Berger in \cite{Berger01} (see also \cite{BDVW03,YuZhang2003}), where a candidate for the dual $A^!$ of the space of syzygies was described,
and where a condition for that candidate to work was given. 

\medskip

Each of the equivalent ways to look at the space of syzygies ($\Ext$, $\Tor$, Quillen homology) exhibits a rich structure of an 
associative algebra up to homotopy, or $\A_\infty$-algebra. This viewpoint has become more common in ring theory in the past decade, once it was understood that higher operations on the $\Ext$-algebras of associative algebras allow one to keep track of all the homotopy information of those algebras that would otherwise be lost if we restrict ourselves to the Yoneda product only. This circle of ideas is presented 
 in a sequence of papers of Lu, Palmieri, Wu, and Zhang \cite{LPWZ04,LPWZ09}.

In the case of $N$-Koszul algebras, the  $\A_\infty$-algebra structure of the Yoneda algebras was computed by He and Lu in~\cite{HeLu05}. They proved that most of the higher operations for such an algebra vanish, and that the non-vanishing ones admit very explicit formulas.
In a sense, their theorems provide an exhaustive description of the $\Ext$-algebras as $\A_\infty$-algebras. However, those theorems have a slightly mysterious feature which makes one hope for a different way to view the same result. Namely, in the quadratic case the $\Ext$-algebra of a Koszul algebra is an $\A_\infty$-algebra with all the higher operations vanishing, that is an associative algebra, presented by generators and relations.
In the $N$-Koszul case, the $\Ext$-algebra  carries an $\A_\infty$-algebra structure with all the higher operations except for the $N$th one vanishing, that is, an $\A_{2,N}$-algebra. It is also proved to be a reduced $\A_{2,N}$-algebra, meaning that it satisfies additional vanishing conditions. However to describe the higher operations explicitly, this algebra has to be realised as a subspace of a usual associative algebra.

\medskip

In this paper, we suggest to replace the condition of being reduced by a more functorial condition involving properties of operations only. More precisely, we consider a new type of algebras which we call $\NA_{2,N}$-algebras. An $\NA_{2,N}$-algebra is a graded vector space with an associative product $\mu_2$ of degree zero, and an $N$-ary operation $\mu_N$ of degree $2-N$  which satisfy the identity 
\begin{multline}\label{B-on-elements}
\mu_2(\mu_N(-,\ldots,-),-)+(-1)^{N-1}\mu_2(-,\mu_N(-,\ldots,-))+\\+
\sum_{i=1}^N(-1)^{i-1+N}\mu_N(-,\ldots,-,\mu_2(-,-),-,\ldots,-)=0,
\end{multline}
and the identities
\begin{equation}
\mu_N(-,\ldots,-,\mu_N(-,\ldots,-),-,\ldots,-)=0.  
\end{equation}
It turns out that the Berger's candidate $A^!$ for the linear dual of space of syzygies admits the following  presentation via generators and relations
as a $\NA_{2,N}$-algebra. 
\begin{inttheorem}[\ref{thm:presentationKDinfty}]
Let $A=T(V)/(R)$ be a finitely generated $N$-homogeneous algebra.
Then
 $$
A^!\cong\mathop{\mathrm{Free}}\nolimits_{\NA_{2,N}}(V^*)/(\mu_2(V^*,V^*),\mu_N(R^\bot)) 
 $$
as  $\NA_{2,N}$-algebras, where $\mu_N(R^\bot)$ is viewed as a subspace of $\mu_N(V^*,V^*,\ldots,V^*)$.
\end{inttheorem}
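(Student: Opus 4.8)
The plan is to build the obvious comparison morphism from the presented algebra to $A^!$ and then prove it is an isomorphism by exhibiting matching monomial bases on both sides, the latter obtained from Gr\"obner bases over the non-symmetric operad governing $\NA_{2,N}$-algebras.

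First, recall that $A^!$ carries two compatible gradings, a homological one and a weight one, and is concentrated in bidegrees $(2m,mN)$ and $(2m+1,mN+1)$. With respect to these, $\mu_2$ has homological degree $0$ and is weight-additive, so $\mu_2(A^!_1,A^!_1)$ lies in the weight-$2$ part of $A^!_2$, which is $0$ for $N>2$; and $\mu_N$ has homological degree $2-N$ and is weight-additive, so $\mu_N\bigl((A^!_1)^{\otimes N}\bigr)\subseteq A^!_2$, and by the construction of the $\NA_{2,N}$-structure this restriction is the canonical projection $(V^*)^{\otimes N}\twoheadrightarrow (V^*)^{\otimes N}/R^\perp=A^!_2$, which annihilates $R^\perp$. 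Hence the inclusion $V^*\hookrightarrow A^!_1$ extends, by the universal property of the free $\NA_{2,N}$-algebra, to a morphism of $\NA_{2,N}$-algebras
$$\Phi\colon\ \mathop{\mathrm{Free}}\nolimits_{\NA_{2,N}}(V^*)\big/\bigl(\mu_2(V^*,V^*),\mu_N(R^\perp)\bigr)\longrightarrow A^!.$$
Since $\Phi$ preserves both gradings and all graded pieces are finite-dimensional ($A$ being finitely generated), it suffices to prove that $\Phi$ is bijective in each bidegree.

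For this I would proceed in two Gr\"obner-theoretic steps. First, compute a Gr\"obner basis of the non-symmetric operad defining $\NA_{2,N}$-algebras (generators $\mu_2$ with associativity and $\mu_N$, relations~\eqref{B-on-elements} and $\mu_N(\dots,\mu_N(\dots),\dots)=0$), for a suitable order on tree monomials; I expect it to be finite, with normal trees on a space $X$ that are, weight by weight, iterated $\mu_2$-products whose factors are single leaves of $X$ or ``blocks'' $\mu_N(\ell_1,\dots,\ell_N)$ of leaves, with no block inside another block and no $\mu_2$ in the distinguished slot of $\mu_N$ occurring in~\eqref{B-on-elements}. In particular $\mu_N$ is injective on $X^{\otimes N}$ in $\mathop{\mathrm{Free}}\nolimits_{\NA_{2,N}}(X)$, which makes the symbol $\mu_N(R^\perp)$ in the statement meaningful. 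Second, using Gr\"obner bases for algebras over this operad (the general notion introduced in the paper), fix an ordered basis $e_1,\dots,e_d$ of $V^*$, a degree-lexicographic order on $(V^*)^{\otimes\bullet}$ and a compatible order on normal trees, together with a Gr\"obner basis of $R^\perp\subseteq (V^*)^{\otimes N}$ with set of leading monomials $\Lambda$; then the leading terms of the defining relations $\mu_2(V^*,V^*)$ and $\mu_N(R^\perp)$ are the patterns $\mu_2(e_i,e_j)$ and $\mu_N(\lambda)$ for $\lambda\in\Lambda$, and one completes these, together with the operadic Gr\"obner basis, to a Gr\"obner basis of the presented algebra.

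The decisive step, and the one I expect to be the main obstacle, is to match the resulting normal monomials with a monomial basis of $A^!$. Deleting every normal tree that contains a pattern $\mu_2(e_i,e_j)$ or a block $\mu_N(\lambda)$ with $\lambda\in\Lambda$, the survivors of weight $mN$ (resp.\ $mN+1$) should be exactly the tensor monomials in $(V^*)^{\otimes mN}$ (resp.\ $(V^*)^{\otimes mN+1}$) that avoid every shifted copy of $\Lambda$ — that is, a basis of $\bigl((V^*)^{\otimes mN}\bigr)\big/\sum_{i+N+j=mN}(V^*)^{\otimes i}\otimes R^\perp\otimes (V^*)^{\otimes j}$, which is $A^!_{2m}$ by definition of Berger's candidate (and similarly in odd degrees). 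The non-obvious point is that~\eqref{B-on-elements} lets a block $\mu_N(\dots)$ inside an enclosing $\mu_N$ be ``slid'' from one slot to the next — after packing it with a neighbouring leaf through $\mu_2$ — and finally be released as a genuine $\mu_2$-factor; applying this inside the ideal generated by $\mu_N(R^\perp)$ shows that the forbidden subspaces $(V^*)^{\otimes i}\otimes R^\perp\otimes (V^*)^{\otimes j}$ occur at \emph{all} positions $i$, not only at multiples of $N$ (which is all one gets from $\mu_2$-products of blocks). I would verify this first in weight $2N$ and then by induction on the weight. On the Gr\"obner side one must confirm confluence of the completed basis — in particular that every S-polynomial coming from an overlap of~\eqref{B-on-elements}, or of $\mu_N(\dots,\mu_N(\dots),\dots)=0$, with $\mu_2(V^*,V^*)$ or with $\mu_N(R^\perp)$ reduces to zero — and keep track of the Koszul signs forced by the odd degree of $\mu_N$ when $N$ is odd. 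Once the bases are matched, $\Phi$ is bijective in every bidegree, hence an isomorphism of $\NA_{2,N}$-algebras; for $N=2$ the statement degenerates to Priddy's $A^!\cong T(V^*)/(R^\perp)$ with $\mu_2$ and $\mu_N$ identified.
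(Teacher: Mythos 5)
Your proposal follows essentially the same route as the paper: both rest on the operadic Gr\"obner basis of $\NA_{2,N}$ (whose normal monomials are the alternating right-nested towers of Corollary~\ref{basis-B}, not quite the $\mu_2$-products of non-nested blocks you guess, though this does not change the strategy), on Gr\"obner bases for algebras over it via the extension of constants, and on the decisive ``sliding'' observation that relation~\eqref{mu2N} together with the tower relations~\eqref{tower} forces $R^\perp$ to be annihilated in every tensor position, identifying the quotient with $A^\vee$ restricted to weights $\equiv 0,1\pmod N$. The one organizational difference is that the paper first treats $R=V^{\otimes N}$ to identify the universal quotient $D=\NA_{2,N}(V^*)/(\mu_2(V^*,V^*))$ with $V^*\oplus (V^*)^{\otimes N}\oplus (V^*)^{\otimes (N+1)}\oplus\cdots$ and then imposes $\mu_N(R^\perp)=0$ as a further vector-space quotient, which sidesteps your heavier step of choosing a Gr\"obner basis of $R^\perp$ and verifying confluence of a completed basis for the presented algebra.
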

To prove this theorem, and in general to get a better understanding of $\NA_{2,N}$-algebras, we introduce a general notion of Gr\"obner bases for algebras over non-symmetric operads. 

We also exhibit a precise equivalence between describing a free $A$-module resolution of the ground field $\KK$ and describing a model for~$A$. Overall, this demonstrates  a clear analogy between the $N=2$ Koszul duality theory of Priddy and the higher Koszul property for $N$-homogeneous algebras with $N>2$. 

\medskip

We were motivated to develop the higher Koszul duality theory for $N$-homogeneous algebras in a way similar to the Koszul duality for quadratic algebras in order to generalise it even further. Some related work was done in \cite{GM11} for relations of weights $2$ and $N$, and in \cite{RS} for relations of weights~$N_1$ and $N_2$, both greater than~$2$.
However, the results of \cite{CG11} demonstrate that even ``small'' resolutions may yield  ``big'' $\A_\infty$-algebra structures. And so it might make sense to refine the corresponding class of Koszul algebras. 
We shall address this question in a future work. 

\medskip
 
\subsection*{Layout. }
The paper is organised as follows. In Section~\ref{sec:recoll}, we recall some basic notions and results we use throughout the paper. In Section~\ref{sec:Grobner}, we introduce Gr\"obner bases for non-symmetric operads and for algebras over a non-symmetric operad, and we formulate the key results about them. 
In Section~\ref{sec:operadB}, we introduce $\NA_{2,N}$-algebras and compute its reduced Gr\"obner basis.
 In Section~\ref{sec:present}, we prove the theorem stated above, and use it to formulate an equivalent criterion for an $N$-homogeneous algebra to be $N$-Koszul. In Section~\ref{sec:higherKoszul}, we relate the construction of the free resolution of the trivial $A$-module and the construction of a dg algebra resolution of~$A$, using the language of twisting morphisms.

\subsection*{Conventions. }
We work over a ground field $\KK$. The word ``algebra'', unless otherwise specified, refers to an associative algebra. As above, $N$ is a positive integer  greater than~$2$. Most of vector spaces that we use are bi-graded with finite-dimensional bi-graded components. 
The \emph{weight} grading is coming from the convention that we consider algebras with generators of weight~$1$ subject to homogeneous relations, and the \emph{(homological) degree} is coming from the fact that our constructions produce chain complexes. 

\subsection*{Acknowledgements. } The first author acknowledges partial support of Grant GeoAlgPhys 2011--2013 awarded by the University of Luxembourg. We would like to express our appreciation to the Max-Planck Institute for Mathematics in Bonn, where the work on this paper began, and to the University Lyon~1, where the paper was completed,  for the joint invitation and the excellent working conditions. We are grateful to Eric Hoffbeck for many useful comments on the first draft of this paper. Last but not the least, the first author wishes to thank the organisers of the workshop ``New developments in noncommutative algebra'' for a great opportunity to discuss some maths in the most beautiful scenery of the Isle of Skye; he benefited from a few conversations with participants of that workshop, and is especially thankful to Andrea Solotar for explaining the key results of~\cite{RS}. 
 
\section{Recollections}\label{sec:recoll}

We shall recall the basic definitions and results used throughout the paper, however trying to keep this section reasonably concise. For details on associative algebras and operads and their homotopy theory we refer the reader to the book \cite{LodayVallette10}. 

\subsection{Koszul $N$-homogeneous algebras}\label{subsec:N-Koszul}

\begin{definition}
An associative algebra $A$ presented by generators and relations, $A=T(V)/(R)$ is said to be \emph{$N$-homogeneous} if $R\subset V^{\otimes N}$. In this case, the algebra $A$ is weight graded: $A\cong \oplus_{m\ge 0} A_m$. When $V$ is finite dimensional, the \emph{$N$-homogeneous dual algebra} $A^\vee$ of an $N$-homogenenous algebra is defined by the formula 
 $$
A^\vee:=T(V^*)/(R^\bot),
 $$ 
where $R^\bot\subset (V^*)^{\otimes N}$ is the annihilator of $R$ under the natural pairing of vector spaces $(V^*)^{\otimes N}\otimes V^{\otimes N}\to\KK$. 
\end{definition}

In the case $N=2$, it turns out that various homological questions of $A$ admit natural answers in terms of the algebra $A^\vee$ which is, in that case, usually denoted by $A^!$. However, in the case $N>2$, only some of the homogeneous components of $A^\vee$ are relevant. We define the graded vector space $A^!$ by the formula
 $$
A^!_m:=
\begin{cases}
A^\vee_m, \quad m\equiv 0,1\pmod{N},\\
0,\phantom{aaaa} \text{otherwise}, 
\end{cases}
 $$
where in addition to being of weight $m$, $A^\vee_m$ is assigned the homological degree $\frac{2m}{N}$ if $m\equiv 0\pmod{N}$ and the homological degree $\frac{2(m-1)}{N}+1$ is $m\equiv1\pmod{N}$. The associative product on $A^\vee$ induces an associative product on $A^!$ (whenever the product in $A^\vee$ ends up in a component absent in $A^!$, the corresponding product is defined to be equal to zero). The algebra $A^!$ is called the \emph{Koszul dual algebra} of~$A$. Note that our notational convention differs somewhat from the one adopted in earlier literature \cite{Berger01,BDVW03}; we believe that it is strongly beneficial to use, for any homogeneous algebra $A$, the notation $A^{\ac}$ for the space of syzygies of~$A$, hence the change.

We also put $A^{\ac}:=(A^!)^*$ (we use the convention that the dual of a graded space with finite-dimensional component is the direct sum of component-wise duals). Since $A^!$ is weight-wise finite dimensional, $A^{\ac}$ is a coalgebra. Note that for $N=2$, we have $A^\vee=A^!$.

Recall from Berger \cite{Berger01} that the \emph{Koszul complex} of an $N$-homogeneous algebra $A$ is $A^{\ac}\otimes A$ with the boundary maps 
\begin{gather*}
A^{\ac}_{kN}\otimes A\to A^{\ac}_{(k-1)N+1}\otimes A,\\ 
A^{\ac}_{kN+1}\otimes A\to A^{\ac}_{kN}\otimes A 
\end{gather*}
being, respectively, the composites
\begin{gather*}
A^{\ac}_{kN}\otimes A\to A^{\ac}_{(k-1)N+1}\otimes (A^{\ac}_1)^{\otimes(N-1)}\otimes A\cong A^{\ac}_{(k-1)N+1}\otimes (A_1)^{\otimes(N-1)}\otimes A\to A,\\ 
A^{\ac}_{kN+1}\otimes A \to A^{\ac}_{kN}\otimes A^{\ac}_1\otimes A\cong A^{\ac}_{(k-1)N+1}\otimes A_1\otimes A\to A,
\end{gather*}
the maps coming from the coproduct of $A^{\ac}$, the identification $A^{\ac}_1=(V^*)^*\cong V=A_1$, and the product of $A$. Note that we assume elements of~$A$ to be of homological degree~$0$, so the elements of $A^{\ac}_{kN}\otimes A$ (respectively, $A^{\ac}_{kN+1}\otimes A$) are, according to the convention we adopted above, assigned the homological degree $2k$ (respectively, $2k+1$).

\begin{definition}
An $N$-homogeneous algebra $A$ is said to be \emph{$N$-Koszul} if its Koszul complex is acyclic in positive homological degrees.
\end{definition}

\subsection{Homotopy associative algebras}

Let $A$ be a graded vector space, and $f\colon A^{\otimes k} \to A$, $g\colon A^{\otimes l} \to A$ be two linear maps of degrees~$p$ and $q$ respectively. We define their pre-Lie product $f \star g \colon A^{k+l-1} \to A$ by the formula 
 $$
f \star g := \sum_{i=1}^k (-1)^{q(k-1)+(l-1)(i-1)} f\circ_i g \ ,
 $$
where $f\circ_i g$ stands for $f(\id^{\otimes(i-1)}\otimes g\otimes \id^{\otimes (k-i)})$.

\begin{definition}
Let $N\ge 3$ be an integer. An \emph{$\A_{2,N}$-algebra} is a triple $(A; \mu_2, \mu_N)$ where $A$ is a graded vector space and where $\mu_2 : A^{\otimes 2}\to A$ and $\mu_N : A^{\otimes N} \to A$ are linear maps of degree $0$ and $2-N$ respectively. They are required to satisfy the relations 
 $$
\mu_2 \star \mu_2 = \mu_2 \star\mu_N + \mu_N \star\mu_2=\mu_N \star \mu_N=0. 
 $$
\end{definition}
An $\A_{2,N}$-algebra is a particular case of an $\A_\infty$-algebra (see, e.~g., \cite{Keller01} and~\cite[Chapter~$9$]{LodayVallette10}), whose differential vanishes and whose structure maps $\mu_n$ vanish as well for $n\ne 2, N$ .

Dualising the above definitions, we can define the corresponding notion of coalgebras. If $C$ is a graded vector space, and $f\colon C\to C^{\otimes k}$, $g\colon C\to C^{\otimes l}$ be two linear maps of degrees~$p$ and $q$ respectively, we define their pre-Lie product $f \star g \colon C\to C^{k+l-1}$ by the formula 
 $$
f \star g := \sum_{i=1}^k (-1)^{q(k-1)+(l-1)(i-1)} g\ {}_i\!\circ f \ , 
 $$
where $g\ {}_i\!\circ f$ stands for $(\id^{\otimes( i-1)}\otimes g \otimes \id^{\otimes (k-i)})\circ f$. An \emph{$\A_{2,N}$-coalgebra} is a triple $(C; \delta_2, \delta_N)$ where $C$ is a graded vector space and where $\delta_2\colon C \to C^{\otimes 2}$ and $\delta_N\colon C \to  C^{\otimes N}$ are linear maps of degree $0$ and $N-2$ respectively. They are required to satisfy the relations 
 $$
\delta_2 \star \delta_2 = \delta_2 \star\delta_N + \delta_N \star\delta_2=\delta_N \star \delta_N=0. 
 $$

\subsection{Koszul dual and  $\Ext$-algebra}
This section is a brief summary of the results obtained in~\cite{HeLu05}.

Let $A$ be an augmented algebra. Recall that the $\Ext$-algebra $\Ext^\bullet_A(\KK, \KK)$ of $A$ is defined as the derived functor $R^\bullet\Hom_A(\KK,\KK)$; it can be computed in many ways, for instance, via the bar construction. The \emph{bar construction} $\BB A$ is a quasi-free coalgebra $T^c(s\bar A)$ on the suspension of the augmentation ideal of $A$. Its differential is the unique coderivation extending the product of $A$. Hence, the linear dual $(\BB A)^*$ of the bar construction is a differential graded algebra, whose underlying cohomology groups are equal to the $\Ext$-functor: 
$$H^\bullet ((\BB A)^*)\cong \Ext^\bullet_A(\KK,\KK)\ .$$
As the homology of a dg algebra, the $\Ext$-functor acquires a canonical associative product; this algebra is called the \emph{Yoneda algebra}. Moreover, using the Homotopy Transfer Theorem \cite{LPWZ09, LodayVallette10}, one can endow the $\Ext$-functor with an $\A_\infty$-algebra structure, which extends the Yoneda product. We call it the \emph{Yoneda $\A_\infty$-algebra}. In the $\A_\infty$-setting, we shall, for simplicity, mostly work with non-unital algebras, and therefore consider the augmentation
ideal of the usual Yoneda algebra.

\begin{definition}
An $\A_{2,N}$-algebra $E$ is said to be \emph{reduced} if $\mu_2(a_1,a_2)=0$ when both $a_1$ and $a_2$ are of odd degree, and $\mu_N(a_1,\ldots,a_N)=0$ when at least one of $a_i$ is \emph{not} of odd degree. 
\end{definition}

One of the reasons for the notion of a reduced $\A_{2,N}$-algebra to be important is explained by the following theorem.

\begin{theorem}[{\cite[Thm.~6.2]{HeLu05}}]\label{thm:reduced}
Let $A$ be an $N$-homogeneous algebra. It is $N$-Koszul if and only if its Yoneda $\A_\infty$-algebra is a reduced $\A_{2,N}$-algebra generated in degree~$1$.
\end{theorem}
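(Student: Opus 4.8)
The plan is to route both implications through the extra \emph{weight} grading carried by $\Ext^\bullet_A(\KK,\KK)$ and through Berger's characterization of $N$-Koszulity by the \emph{purity} (diagonal concentration) of that grading. Since the bar construction $\BB A=T^c(s\bar A)$ is weight graded and its differential preserves weight, the cohomology $\Ext^\bullet_A(\KK,\KK)=H^\bullet((\BB A)^*)$ splits as $\Ext^n_A(\KK,\KK)=\bigoplus_w\Ext^n_{(w)}$, and the Homotopy Transfer Theorem may be applied compatibly with this second grading, so the transferred operations $\mu_k$ on the Yoneda $\A_\infty$-algebra $E:=\Ext^\bullet_A(\KK,\KK)$ are weight homogeneous of weight~$0$. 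Together with the usual degree count this means that $\mu_k$ sends a tuple of elements of cohomological degrees $d_1,\dots,d_k$ and weights $w_1,\dots,w_k$ to an element of cohomological degree $d_1+\dots+d_k+2-k$ and weight $w_1+\dots+w_k$. The first step is then to record --- essentially Berger's criterion \cite{Berger01}, which one may re-derive from the fact that for an $N$-Koszul algebra the Koszul complex $A^{\ac}\otimes A$ is the minimal free resolution of $\KK$ while, conversely, diagonal concentration of $\Ext$ pins the minimal resolution down by a Hilbert-series comparison, forcing it to coincide with $A^{\ac}\otimes A$ --- that $A$ is $N$-Koszul if and only if $E$ is concentrated on the ``$N$-diagonal'', i.e.\ $\Ext^n_{(w)}=0$ unless $w=\tfrac{nN}{2}$ for $n$ even and $w=\tfrac{(n-1)N}{2}+1$ for $n$ odd.

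For the ``only if'' direction, assume $A$ is $N$-Koszul, so $E$ is diagonally concentrated. Substituting the only admissible bidegrees $(2k,kN)$ and $(2k+1,kN+1)$ for the inputs and the output of $\mu_k$ and solving the resulting linear relations in the number of odd-degree inputs is a short computation which yields, because $N>2$: that $\mu_k=0$ for all $k\notin\{2,N\}$; that $\mu_2(a_1,a_2)=0$ whenever $\deg a_1$ and $\deg a_2$ are both odd; and that $\mu_N(a_1,\dots,a_N)=0$ unless every $a_i$ has odd degree. The $\A_\infty$-relations collapse accordingly to the $\A_{2,N}$-relations, so $E$ is a reduced $\A_{2,N}$-algebra. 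It then remains to check that $E$ is generated in degree~$1$ under $\mu_2$ and $\mu_N$: here $E^1=V^*$ and $E=A^!$ with $\mu_2$ the Yoneda product (hence the product of $A^\vee$ on the surviving components), and since $A^\vee=T(V^*)/(R^\bot)$ is generated in weight~$1$ one has $A^\vee_m=A^\vee_{m-1}\cdot A^\vee_1$ for all $m\ge2$; once $\mu_N$ is identified (via the explicit He--Lu formula \cite{HeLu05}, equivalently via the Homotopy Transfer Theorem applied to the Koszul resolution) with the $N$-fold product map $A^\vee_a\otimes(A^\vee_1)^{\otimes(N-1)}\to A^\vee_{a+N-1}$ landing in a surviving component, an induction on $k$ builds $A^!_{kN}$ from $A^!_{(k-1)N+1}$ and $V^*$ through $\mu_N$, and $A^!_{kN+1}$ from $A^!_{kN}$ and $V^*$ through $\mu_2$.

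For the ``if'' direction, assume $E$ is a reduced $\A_{2,N}$-algebra generated in degree~$1$; the goal is to deduce diagonal concentration of $E$ and then invoke the reformulation above. One argues by induction on cohomological degree that every element of $E$ obtained from $E^1$ by iterated applications of $\mu_2$ and $\mu_N$ has bidegree on the $N$-diagonal. The base case $E^1$ lies in weight~$1$ (as always for generators of weight~$1$), which is on the diagonal. For the inductive step, if the outermost operation is $\mu_2$, then either the output vanishes --- when both inputs are of odd degree, by the reduced hypothesis --- or at least one input is of even degree and the bidegree formula shows the output is again on the diagonal; if the outermost operation is $\mu_N$, then either the output vanishes --- when some input is not of odd degree --- or all $N$ inputs are of odd degree and the bidegree formula again lands the output on the diagonal. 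Since $E$ is \emph{generated} in degree~$1$, this exhausts $E$, so $\Ext^n_{(w)}=0$ off the $N$-diagonal and $A$ is $N$-Koszul.

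The main obstacle is the ``generated in degree~$1$'' clause in the ``only if'' direction: the bidegree numerology alone tells us only that $E$ lives on the right diagonal and carries a reduced $\A_{2,N}$-structure, but to see that this structure is actually \emph{generated} by $E^1$ one must show that $\mu_N$ is non-degenerate enough --- essentially one needs the explicit description of $\mu_N$, which is exactly where the real computational content of He--Lu (or of an explicit Homotopy Transfer on the Koszul complex) enters. A secondary point requiring care is the compatibility of the Homotopy Transfer Theorem with the weight grading, so that the $\mu_k$ are genuinely weight homogeneous, together with a careful statement and proof of the ``diagonal concentration $\Longleftrightarrow$ $N$-Koszul'' equivalence, including the Hilbert-series comparison with $A^{\ac}\otimes A$ from which acyclicity of the Koszul complex is extracted.
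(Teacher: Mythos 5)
The paper does not prove this statement at all --- it is quoted verbatim from He--Lu \cite{HeLu05} as background --- so there is no internal proof to compare against; I can only assess your reconstruction on its own terms, and it is essentially correct and follows what is in effect the standard (He--Lu/Berger) route. Your central device, the weight grading on $\Ext^\bullet_A(\KK,\KK)$ together with a weight-homogeneous transfer, is sound, and the bidegree arithmetic checks out: writing $j$ for the number of odd-degree inputs of $\mu_k$ on diagonally concentrated classes, the constraint from matching degree and weight of the output reduces to $k=t(N-2)+2$, $j=tN$ (even output) or $k=s(N-2)+2$, $j=sN+1$ (odd output), whose only admissible solutions for $N>2$ are $(k,j)\in\{(2,0),(2,1),(N,N)\}$ --- exactly the reduced $\A_{2,N}$ conditions. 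The converse induction (reducedness plus generation in degree $1$ forces diagonal concentration) is also correct. You are right to single out the two places where the argument is not self-contained: (i) the equivalence ``$N$-Koszul $\Leftrightarrow$ diagonal purity of $\Ext$'' is Berger's theorem \cite{Berger01} and genuinely needs the comparison of the Koszul complex with the minimal resolution, not just a Hilbert-series count; and (ii) the ``generated in degree~$1$'' clause in the forward direction cannot be extracted from bidegree bookkeeping alone --- the step $\Ext^{2k+1}\to\Ext^{2k+2}$ must go through $\mu_N$, whose identification with the product of $A^\vee$ is precisely Theorem~\ref{computeproducts} (He--Lu, Thm.~6.5), so your proof of 6.2 quietly depends on 6.5; this is acceptable as a citation but should be stated as such rather than as an afterthought. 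A minor point worth a sentence in a polished write-up: the Yoneda $\A_\infty$-structure is only defined up to $\A_\infty$-isomorphism, and ``reduced'' is not manifestly invariant under such isomorphisms; the weight grading is what saves the statement, since any weight-homogeneous minimal model is subject to your bidegree constraints.
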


The higher operations of the Yoneda algebra of an $N$-Koszul algebra can be described as follows. 

\begin{theorem}[{\cite[Thm.~6.5]{HeLu05}}]\label{computeproducts}
The Yoneda $\A_\infty$-algebra $\Ext^\bullet_A(\KK,\KK)$ of an $N$-Koszul algebra $A$ satisfies 
 $$
\Ext^k_A(\KK,\KK)=
\begin{cases}
A^!_{N\frac{k}{2}}, \phantom{aaai}\text{ if }k\text{ is even},\\
A^!_{N\frac{k-1}{2}+1}, \text{ if }k\text{ is odd}.
\end{cases} 
 $$
The operations $\mu_2$ and $\mu_N$ on that algebra can be computed using the product on $A^\vee$ as follows:
\begin{gather*}
\mu_2 \colon 
\begin{cases}
{A^!}_{iN} \otimes  {A^!}_{jN} \to  {A^!}_{(i+j)N},    \\
{A^!}_{iN+1} \otimes  {A^!}_{jN} \to  {A^!}_{(i+j)N+1}, \\
{A^!}_{iN} \otimes  {A^!}_{jN+1} \to  {A^!}_{(i+j)N+1},
\end{cases}\\
\mu_N \colon {A^!}_{k_1 N+1} \otimes \cdots \otimes {A^!}_{k_N N+1} \to {A^!}_{(k_1+\cdots+k_N+1)N},   
\end{gather*}
and are zero for all other choices of arguments.
\end{theorem}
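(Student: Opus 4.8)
The plan is to read off both the underlying graded vector space and the operations from the \emph{Koszul complex} rather than from the bar resolution. Since $A$ is $N$-Koszul, the Koszul complex $A^{\ac}\otimes A$ is a free resolution of the trivial left $A$-module $\KK$, and it is \emph{minimal}: each of the two types of boundary map sends a generator $1\otimes\xi$ to an element of $A^{\ac}\otimes(A_1\cdot A)\subseteq A^{\ac}\otimes\bar A$, because the coproduct of $A^{\ac}$ is there contracted against the product of $N-1$ (resp.\ one) weight-one elements of $A$, which has positive weight. Applying $\Hom_A(-,\KK)$ therefore leaves the complex $\bigoplus_k (A^{\ac}_{kN})^{*}\oplus(A^{\ac}_{kN+1})^{*}$ with zero differential, which is precisely $A^!$ with the bigrading in which weight $Nk$ has homological degree $2k$ and weight $Nk+1$ has homological degree $2k+1$. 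This already gives $\Ext^k_A(\KK,\KK)\cong A^!_{Nk/2}$ for $k$ even and $A^!_{N(k-1)/2+1}$ for $k$ odd.

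Next I would obtain the $\A_\infty$-structure by transferring, via the Homotopy Transfer Theorem, the dg algebra structure of $(\BB A)^{*}$ along a weight-graded homotopy retraction onto its cohomology $A^!$; every transferred operation $\mu_n$ then preserves the weight grading and shifts the homological degree by $2-n$. The key point is that the bigrading of $A^!$ is rigid: an element of weight $w$ is forced into homological degree $2w/N$ when $N\mid w$ and into degree $2(w-1)/N+1$ when $w\equiv 1\pmod N$. Combining $w(\text{output})=\sum_i w(x_i)$ with $\deg(\text{output})=\sum_i\deg(x_i)+2-n$ and running a short case analysis on the parities of the input degrees, one finds that the two identities are simultaneously solvable only for $n=2$ with at least one input of even degree — giving exactly the three listed source/target pairs for $\mu_2$ — or for $n=N$ with \emph{all} inputs of odd degree — forcing the target weight $(k_1+\cdots+k_N+1)N$ for $\mu_N$. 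This recovers, purely from the numerics, that the Yoneda $\A_\infty$-algebra is a reduced $\A_{2,N}$-algebra, in agreement with Theorem~\ref{thm:reduced}, and it pins down the shape of the two surviving operations.

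It then remains to identify the two operations with the multiplication of $A^\vee$. For $\mu_2$ this says that the Yoneda product on $\Ext^\bullet_A(\KK,\KK)$, which is independent of the chosen resolution, corresponds under the identification above to the associative product of $A^\vee$ truncated to the components surviving in $A^!$; I would prove it by writing an explicit $A$-linear comparison map lifting $\id_\KK$ between the bar resolution $A\otimes\BB A$ and the Koszul resolution $A\otimes A^{\ac}$ and checking that it intertwines the natural comultiplicative structures up to the standard homotopy, so that the Yoneda product descends to the product of $A^\vee$ on $A^!$.

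The genuinely higher operation $\mu_N$ is where the real work lies. I would compute it directly from the Homotopy Transfer Theorem formula, as a signed sum over planar binary trees with $N$ leaves, internal vertices labelled by the product of $(\BB A)^{*}$, internal edges by the contracting homotopy $h$, the root by the projection and the leaves by the inclusion $A^!\hookrightarrow(\BB A)^{*}$. The degree constraints of the previous step force every input to lie in a component $A^!_{k_iN+1}$ and the output in $A^!_{(k_1+\cdots+k_N+1)N}$, and this rigidity is strong enough that only a very restricted family of tree shapes can contribute with nonzero value; after the cancellations forced by the $\A_\infty$-relations the sum collapses, and evaluating the surviving contribution against the part of the coproduct of $A^{\ac}$ coming from the $N$-homogeneous relations identifies $\mu_N$ with the product $A^\vee_{k_1N+1}\otimes\cdots\otimes A^\vee_{k_NN+1}\to A^\vee_{(k_1+\cdots+k_N+1)N}$. \textbf{The main obstacle} is precisely this control of the tree sum: showing that all but essentially one tree shape cancel or vanish, and tracking the Koszul signs carefully enough that the surviving term reproduces the $A^\vee$-multiplication with coefficient exactly $1$ rather than some spurious scalar.
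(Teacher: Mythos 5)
First, note that the paper does not prove this statement at all: Theorem~\ref{computeproducts} is imported verbatim from He and Lu (\cite[Thm.~6.5]{HeLu05}) and used as a black box, so the only fair comparison is with the strategy of that reference. Your first two steps are correct and essentially the standard route. Since $A$ is $N$-Koszul the twisted complex $A^{\ac}\otimes A$ is a minimal free resolution of $\KK$ (the differential lands in $A^{\ac}\otimes\bar A$), so $\Hom_A(-,\KK)$ has zero differential and the computation of $\Ext^k_A(\KK,\KK)$ as the stated components of $A^!$ is immediate. Your bookkeeping argument is also sound: writing the input weights as $w_i=k_iN+\epsilon_i$ with $\epsilon_i\in\{0,1\}$ and degrees $2k_i+\epsilon_i$, the constraint that a weight-homogeneous $\mu_n$ of degree $2-n$ land in a nonzero component of $A^!$ forces $\sum_i\epsilon_i\in\{0,1\}$ with $n=2$, or $\sum_i\epsilon_i=n=N$; this does recover reducedness and the three source/target patterns for $\mu_2$ and the one for $\mu_N$ purely from the bigrading (provided you justify that the transfer can be chosen weight-homogeneous, which is routine here).

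The genuine gap is the step you yourself flag as ``the main obstacle'': the identification of the transferred $\mu_N$ with the multiplication of $A^\vee$. Everything before that point only shows that $\mu_N$ is \emph{some} map ${A^!}_{k_1N+1}\otimes\cdots\otimes{A^!}_{k_NN+1}\to{A^!}_{(k_1+\cdots+k_N+1)N}$; the actual content of the theorem is that this map is the $(N{-}1)$-fold product of $A^\vee=T(V^*)/(R^\bot)$, and a priori it could be any other map of that bidegree (including zero, in which case the whole higher Koszul duality picture would collapse). Saying that ``the tree sum collapses after cancellations forced by the $\A_\infty$-relations'' is not an argument: the $\A_\infty$-relations constrain the collection of all $\mu_n$ but do not by themselves kill individual trees in the transfer formula, and the nonvanishing of the surviving term is exactly what must be computed. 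To close this you would need to fix explicit transfer data $(i,p,h)$ between $(\BB A)^*$ and its cohomology --- in practice a contracting homotopy adapted to the weight decomposition of the bar complex, which for $N$-Koszul algebras exists by the lattice-distributivity underlying Koszulness --- and then evaluate the sum over planar binary trees on representative cocycles, checking that the result pairs with $R^\bot$-classes as the product of $A^\vee$ does. This is precisely the computation carried out in \cite{HeLu05}, and without it your proposal establishes the shape of the operations but not their values.
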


\subsection{Non-symmetric operads}

We denote by~$\Ord$ the category whose objects are finite ordered sets (with order-preserving bijections as morphisms), and by $\Vect$ the tensor category of graded vector spaces (by definition, morphisms in that category are degree zero linear operators, and the symmetry isomorphism is given by the formula $\sigma(v\otimes w)=(-1)^{|v||w|}w\otimes v$). A \emph{(non-symmetric) collection} is a contravariant functor from the category~$\Ord$ to the category~$\Vect$. We shall refer to images of individual sets as \emph{components} of our collection. We denote the set $[k]:=\{ 1, 2, \ldots, k \}$.

\begin{definition}
Let $\P$ and $\Q$ be two  collections. The \emph{composition product} of $\P$ and $\Q$ is the  collection $\P\circ\Q$ defined by the formula
\begin{equation}\label{composition}
(\P\circ\Q)(I):=\bigoplus_{k}\P([k])\otimes\left(\bigoplus_{f\colon I\twoheadrightarrow[k]}\Q(f^{-1}(1))\otimes\cdots\otimes\Q(f^{-1}(k))\right), 
\end{equation}
where the sum is taken over all non-decreasing surjections~$f$. 
\end{definition}

The composition product equips the category of  collections with a structure of a monoidal category. The unit object is the collection $\I$ with 
 $$
\I(M)=
\begin{cases}
\KK.M, \quad |M|=1,\\
0, \phantom{aaii}\quad |M|\ne 1. 
\end{cases} 
 $$
 
\begin{definition}
A \emph{non-symmetric operad} is a monoid in the monoidal category of  collections equipped with the composition product.
\end{definition}

By functoriality, for every collection $\P$ and every finite ordered set $M$, we have the corresponding isomorphism $\P(M)\cong\P(\{1,2,\ldots,|M|\})$, so to define a collection it is sufficient to define a graded vector space 
 $$
\P(n):=\P(\{1,2,\ldots,n\}), \quad \text{for}\  n\ge0 \ .
 $$
The component $\P(n)$ of this graded vector space may be viewed as the space of $n$-ary operations of some sort.

The associativity and the unit condition for a monoid ensure that to know the non-symmetric operad structure on a collection it is sufficient to know all the \emph{partial compositions}: those correspond to the surjections $f$ in Formula~\eqref{composition} (which we now apply in the case $\Q=\P$) for which all the preimages except for $f^{-1}(i)$ for some fixed~$i$ consist of one element, and such that $\P(f^{-1}(j)$ is equal to the unit, for $j\neq i$. Such a partial composition is denoted by $\alpha\circ_i\beta$. These partial compositions satisfy the appropriate associativity relations; namely, if $\alpha\in\P(n)$, $\beta\in\P(m)$, $\gamma\in\P(r)$, we have
\begin{equation}\label{graded-assoc}
(\alpha\circ_i\beta)\circ_j\gamma=
\begin{cases}
(-1)^{|\beta||\gamma|} (\alpha\circ_j\gamma)\circ_{i+r-1}\beta, \qquad  1\le j\le i-1,\\
\alpha\circ_i(\beta\circ_{i+j-1}\gamma), \phantom{aaaaaaaaaaaa} i\le j\le i+m-1,\\
(-1)^{|\beta||\gamma|} (\alpha\circ_{j-m+1}\gamma)\circ_i\beta, \phantom{aaaa}  i+m\le j\le n+m-1.
\end{cases}
\end{equation}
These associativity relations include signs coming from the symmetry isomorphisms; these signs will be of crucial importance in our subsequent computations.

Similarly to the case of associative algebras, a non-symmetric operad can be presented via generators and relations, that is, as a quotient of the free non-symmetric operad $\T(W)$ for some collection~$W$. Let us describe an explicit construction of~$\T(W)$. Suppose that $X$ is a collection of ordered sets, that is a functor from $\Ord$ to $\Ord$, that provides bases for the components of $W$. 

\begin{definition}
A \emph{(planar) tree monomial} is a planar rooted tree with labelled vertices: ``planar'' means that for each vertex $v$ of a tree, the set of its inputs~$I_v$ is ordered, and ``labelled'' means that  each vertex $v$ carries a label from the set $X(I_v)$. We use the notation $t(T)$ for the underlying tree of a tree monomial~$T$. 
\end{definition}

For example, if $X(n)=\varnothing$ for $n\ne 2$, then the component $\T(W)(3)$ has basis elements 
\begin{center}
\includegraphics[scale=0.9]{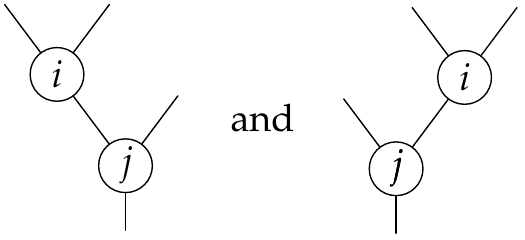}
\end{center}
indexed by pairs $i,j\in X(2)$. 

\begin{proposition}
The free non-symmetric operad $\T(W)$ admits a basis made up of (planar) tree monomials. 
\end{proposition}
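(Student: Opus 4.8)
The plan is to construct an explicit non-symmetric operad on the span of planar tree monomials, show it satisfies the universal property of the free operad $\T(W)$, and then invoke uniqueness of free objects. Concretely, let $\widetilde{\T}(W)(I)$ be the graded vector space whose basis is the set of planar tree monomials with leaf set $I$ and vertices labelled by the chosen bases $X$ of the components of $W$, the degree of a tree monomial $T$ being $\sum_{v}|x_v|$, the sum of the degrees of its labels; equivalently $\widetilde{\T}(W)(I)\cong\bigoplus_\tau\bigotimes_{v\in\tau}W(I_v)$, the sum being over planar trees $\tau$ with leaf set $I$. Functoriality in $\Ord$ is relabelling of leaves.

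I would equip $\widetilde{\T}(W)$ with partial compositions by \emph{grafting}: $T\circ_i T'$ is obtained by identifying the root edge of $T'$ with the $i$-th leaf of $T$ and keeping all labels. To turn grafting into a degree-zero map one must fix, once and for all, a rule ordering the vertices of every planar tree (for instance reading the tree from the leaves to the root, left to right), so that a tree monomial becomes a genuine element of a tensor product $\bigotimes_v W(I_v)$; the grafting map then carries the Koszul sign of the permutation of tensor factors needed to reorder $\bigl(\bigotimes_{v\in T}W(I_v)\bigr)\otimes\bigl(\bigotimes_{w\in T'}W(I_w)\bigr)$ into the chosen order of the vertices of $T\circ_i T'$. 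The trivial tree (one leaf, no vertices), which spans the copy of $\I(1)=\KK$ inside $\widetilde{\T}(W)(1)$, serves as the unit. One then checks the unit axiom and the associativity relations~\eqref{graded-assoc}: on both sides of each line the underlying planar tree with its labels is literally the same, so the only point is that the accumulated Koszul signs agree, which follows from the coherence of the symmetry isomorphisms in $\Vect$ together with tracking the relative order of the vertex sets of $\beta$ and $\gamma$ in the composite — the source of the $(-1)^{|\beta||\gamma|}$ factors. This makes $\widetilde{\T}(W)$ a non-symmetric operad, with an evident morphism of collections $\iota\colon W\to\widetilde{\T}(W)$ sending $x\in W(I)$ to the corolla with single vertex labelled $x$.

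Next I would verify the universal property. Given any non-symmetric operad $\P$ and a morphism of collections $\phi\colon W\to\P$, define $\Phi\colon\widetilde{\T}(W)\to\P$ on a tree monomial $T$ by \emph{evaluating} $T$ inside $\P$: replace each label $x_v$ by $\phi(x_v)\in\P(I_v)$ and compose these operations along the internal edges of $T$ using the structure maps of $\P$, with the Koszul signs prescribed by the same vertex-ordering convention. Because $\P$ satisfies~\eqref{graded-assoc}, the result is independent of the order in which the internal edges are contracted, so $\Phi$ is well defined, and it is immediate from the definitions that $\Phi$ is a morphism of operads with $\Phi\circ\iota=\phi$. Conversely, any operad morphism $\Psi$ with $\Psi\circ\iota=\phi$ is forced on corollas and, since every tree monomial is an iterated partial composition of corollas, on all of $\widetilde{\T}(W)$; hence $\Psi=\Phi$. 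Thus $(\widetilde{\T}(W),\iota)$ is the free non-symmetric operad on $W$, so $\widetilde{\T}(W)\cong\T(W)$, and by construction its tree monomials form a basis.

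The only genuinely delicate part of this argument is the sign bookkeeping. One has to commit to a rigid convention for ordering the vertices of a planar tree and then check that, with that convention, grafting satisfies~\eqref{graded-assoc} exactly and that the evaluation map $\Phi$ is consistent. The combinatorics itself — that grafting yields the expected tree shapes, and that each tree monomial admits a decomposition into corollas — is elementary; the work lies entirely in confirming that every Koszul sign matches, which in the end reduces to the coherence theorem for the symmetric monoidal category $\Vect$.
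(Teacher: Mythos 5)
Your construction is correct and is exactly the standard argument the paper takes for granted here (it states the proposition without proof): realise $\T(W)$ as the span of labelled planar trees with grafting as partial composition, verify the universal property by evaluation in an arbitrary operad, and handle the Koszul signs by fixing a vertex-ordering convention. Your remark that the signs are the only delicate point matches the paper's own caveat immediately after the proposition, namely that a tree monomial must be viewed as an equivalence class and that one must introduce levels (i.e.\ an order of composition) to pin the composite down beyond ``up to sign''.
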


An important property of the aforementioned basis is that each composition of tree monomials is again a tree monomial. To be precise, a tree monomial should be viewed as an equivalence class: to determine the actual composition product in the operad, one has to introduce levels of trees, i.~e. decide what should be composed first (ignoring that would only compute the result up to a sign). 

A basic example of a non-symmetric operad is the \emph{endomorphism operad} $\End_V$ of a vector space $V$, whose components $\End_V(n)$ is $\Hom(V^{\otimes n},V)$ with the partial composition of $f\in\End_V(k)$ and $g\in\End_V(l)$ given by the composition of functions: $f\circ_i g:=f(\id^{\otimes(i-1)}\otimes g\otimes \id^{\otimes (k-i)})$. By definition, a structure of an \emph{algebra} over a non-symmetric operad $\P$ on a vector space $V$ is a morphism of operads $\P\to\End_V$. The \emph{free $\P$-algebra} ${\P}(W)$ generated by a vector space~$W$ is given by the formula
 $$
{\P}(W):=\bigoplus_{n\ge 0}\P(n)\otimes W^{\otimes n}.
 $$

\section{Gr\"obner bases}\label{sec:Grobner}

In this section, we shall  define the notions of Gr\"obner bases for non-symmetric operads and their algebras. Some related work for ``free nonassociative algebras'' has been done in~\cite{Ger06}; a much more general approach, on which our exposition is modelled, is discussed in~\cite{DK10} within the framework of shuffle operads. Let us note, however, that there is a very important feature of non-symmetric operads that symmetric and shuffle operads do not exhibit: the machinery of Gr\"obner bases for non-symmetric operads includes $0$-ary operations. 

\subsection{Gr\"obner bases for non-symmetric operads}
The definitions and results below are completely parallel to those for associative algebras. Basically, Gr\"obner bases allow one to find for each algebra~$A$ a ``monomial replacement'', that is, an algebra $\mathring{A}$ with the same generators and with monomial relations for which the natural monomial basis (of monomials not divisible by any of relations) is also a basis for the vector space $A$.

\begin{definition}
A total ordering of the set of tree monomials is said to be \emph{admissible} if the operadic composition of several tree monomials is an increasing function in each argument.
\end{definition}

Let us describe one of the orderings of tree monomials that we shall use throughout the paper. This ordering was discovered and used by E.~Hoffbeck in~\cite{Hof10}.

\begin{definition}
The \emph{path-lexicographic ordering} of tree monomials is defined as follows. We first assign, to each tree monomial $T$, a sequence of words in the alphabet 
 $$
\mathbb{X}=\bigsqcup_{k\ge0} X(\{1,2,\ldots,k\}), 
 $$
one word for each leaf of~$t(T)$ (here by leaves we mean both the inputs and the vertices that have no inputs but however have labels corresponding to operations with no arguments). For each of those leaves, starting from the leftmost one and moving from left to right, we walk along the unique path from the root of~$t(T)$ to that leaf, and write down the labels of vertices encountered along the way. We form a word in the alphabet~$\mathbb{X}$ from these labels using the isomorphisms $X(I_v)\cong X(\{1,2,\ldots,|I_v|\})$. For each total ordering of~$\mathbb{X}$, we are now able to define the corresponding path-lexicographic ordering of tree monomials. To compare two monomials $S$ and $T$, we first compare the numbers of leaves; a tree with the larger number of leaves is, by definition, larger. If two trees have the same number of leaves, we compare the corresponding sequences word by word, comparing words degree-lexicographically. 
\end{definition}

For example, we have
\begin{center}
\includegraphics[scale=0.9]{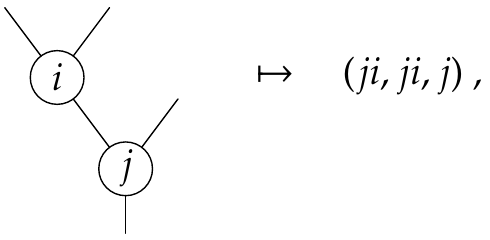} 
\end{center}
and therefore 
\begin{center}
\includegraphics[scale=0.9]{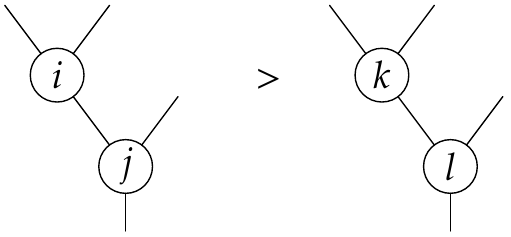}
\end{center}
whenever $(ji,ji,l)>(lk,lk,j)$, that is if $j>l$, or $j=l$ and $i>k$.

\begin{proposition}[\cite{Hof10}]
The path-lexicographic ordering is admissible.
\end{proposition}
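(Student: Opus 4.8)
The plan is to reduce the admissibility of the path-lexicographic order to a monotonicity property of the partial compositions $\circ_i$, and then to establish that property by tracking how $\circ_i$ acts on the sequence of words assigned to a tree monomial. Since every operadic composition of tree monomials is an iterated partial composition and a composite of order-preserving maps is order-preserving, and since varying one argument of an operadic composition amounts either to a single partial composition $\Theta\circ_m\beta$ with $\Theta$ independent of $\beta$ (the ``inner'' case) or to an iteration of the outer-slot operation $T\mapsto T\circ_i(\text{fixed})$ (the ``outer'' case), it is enough to prove the two statements: (i) if $S<S'$ then $T\circ_i S<T\circ_i S'$ for every tree monomial $T$ with at least $i$ inputs; and (ii) if $T<T'$ are tree monomials both with at least $i$ inputs, then $T\circ_i S<T'\circ_i S$ for every $S$.

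Next I would describe the effect of $\circ_i$ on words. Let $u$ be the word of the $i$-th input leaf of $T$. Grafting $S$ there leaves the word of every other leaf of $T$ unchanged, deletes $u$, and inserts in its place the sequence of words of $S$ with the prefix $u$ prepended to each one (the prefix records the path in $T$ from the root down to the parent of the former $i$-th input). Hence the number of leaves of $T\circ_i S$ equals $\#\{\text{leaves of }T\}+\#\{\text{leaves of }S\}-1$, so the number of leaves, which is the primary comparison key, is already monotone in each argument of $\circ_i$. This proves both (i) and (ii) whenever the two monomials being compared have different leaf counts, and leaves only the case of equal leaf counts, where the comparison is carried out entry by entry on the sequences of words.

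For (i) with equal leaf counts, the sequences of $T\circ_i S$ and $T\circ_i S'$ coincide on all entries coming from $T$, while on the block coming from $S$ (resp.\ $S'$) they are obtained by prepending the fixed word $u$ to each word of $S$ (resp.\ $S'$); since prepending a fixed word preserves the degree-lexicographic order on words (it changes the lengths of both by the same amount, and in the equal-length case the first point of divergence stays inside the original words), the first discrepancy and its direction are inherited from the pair $S,S'$, so $T\circ_i S<T\circ_i S'$. For (ii), let $j$ be the first position at which the word sequences of $T$ and $T'$ differ. The entries before position $j$ belong to leaves lying to the left of the $i$-th input and are unaffected by grafting, so the word sequences of $T\circ_i S$ and $T'\circ_i S$ still agree before position $j$; the remaining task is to show that the strict inequality between the $j$-th words of $T$ and $T'$ still forces $T\circ_i S<T'\circ_i S$ after grafting.

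That last step is the main obstacle, and it is precisely where nullary operations must be handled with care -- the feature absent from the arity-$\ge 1$ operadic Gr\"obner theory of~\cite{Hof10}. The $i$-th input of $T$ and of $T'$ may occupy different positions in the left-to-right order of leaves, since a nullary vertex can sit to its left in one tree but not the other; and the $j$-th word may be the word of an input leaf (so it gets extended by the words of $S$ before it is compared) or of a nullary vertex (so it is terminal and unchanged). A word ending at a nullary vertex is never lengthened by a later composition, whereas a word ending at an input leaf always can be, so the comparison rule for these two kinds of words has to be arranged -- consistently with the leaf count being the primary key -- so that replacing an input leaf by a subtree can never reverse an inequality. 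Once this compatibility is isolated as a lemma about words according to whether they terminate at an input leaf or at a nullary vertex, statement (ii) follows, and together with (i) it yields admissibility.
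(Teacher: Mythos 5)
Your reduction to the two monotonicity statements for a single partial composition, the observation that the leaf count is strictly increasing in each argument of $\circ_i$, and the prefix/suffix stability of the degree-lexicographic order on words are all correct, and you have correctly located where the work lies: the paper itself gives no argument (it cites Hoffbeck, whose setting excludes $0$-ary operations), so the interaction with nullary vertices is exactly the part not covered by the reference. One small repairable slip: in (ii) the entries before the first discrepancy $j$ need not lie to the left of the $i$-th input; but since the last letter of a leaf's word records whether that leaf is an input or a nullary vertex, agreement of the words at positions $<j$ forces the $i$-th input to occupy the same position $p<j$ in both trees whenever it occurs before $j$, so the grafted blocks coincide and the first discrepancy survives at the shifted position.

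The genuine gap is the final step, which you defer to an unstated ``lemma about words according to whether they terminate at an input leaf or at a nullary vertex''. You neither formulate nor prove it, and for the ordering as literally defined it fails. The dangerous configuration is: $T<T'$ with equal leaf counts, first discrepancy at position $j$, the $j$-th leaf of $T$ being the $i$-th input (so its word $w$ becomes $wv_1$ after grafting) while the $j$-th leaf of $T'$ is a nullary vertex (so its word $w'$ is unchanged); one then needs $wv_1<w'$ whenever $w<w'$, which degree-lexicographic comparison does not give. Concretely, take $X(0)=\{c\}$, $X(2)=\{a,b\}$ with $a<b$ in $\mathbb{X}$. Let $T$ be the corolla $a$ with the subtree $a(c,-)$ grafted at its second input, and $T'$ the same with $a(c,-)$ at the first input; their leaf-word sequences are $(a,aac,aa)$ and $(aac,aa,a)$, both of arity $2$ with three leaves, and $a<aac$ gives $T<T'$. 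Grafting the monomial $S$ with leaf words $(bb,bb,b)$ into the first input yields first words $abb$ for $T\circ_1 S$ and $aac$ for $T'\circ_1 S$; these have equal length and $aac<abb$, so $T\circ_1 S>T'\circ_1 S$, reversing the inequality. Thus ``arranging the comparison rule so that replacing an input leaf by a subtree can never reverse an inequality'' is not a finishing touch but the entire difficulty: no choice of total order on the alphabet $\mathbb{X}$ achieves it, and the comparison of words itself would have to be modified. As written, the proposal does not prove the proposition.
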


In what follows, we fix some admissible ordering of tree monomials in $\T(W)$. All our definitions and statements are valid for any choice of such ordering. 

\begin{definition}
A tree monomial $S$ is said to be \emph{divisible} by a tree monomial~$T$ if $t(S)$ contains $t(T)$ as a subtree with matching labels. For a fixed admissible ordering, we define the \emph{leading term} $\lt(g)$ of an element $g\in\T(W)$ to be the largest tree monomial~$T$ that appears in $g$ with a non-zero coefficient. This nonzero coefficient (the leading coefficient of $g$) is denoted by $c_g$. 
\end{definition}

Assume that a tree monomial $S$ is divisible by a tree monomial~$T$. For every element $g\in\T(W)$ of the same arity as~$T$, we can consider the operation that inserts $g$ in the place of $T$ in~$S$; this produces an element of the free operad of the same arity as~$S$. We denote that operation by~$m_{S,T}(g)$. Note that by the construction $m_{S,T}(T)=S$, and since we work with admissible orderings, it is clear that if $T'<T$, then $m_{S,T}(T')<S$.

\begin{definition}
Assume that $f$ and $g$ are two elements of $\T(W)$ for which $\lt(f)$ is divisible by $\lt(g)$. The element
 $$
r_g(f):=f-\frac{c_f}{c_g}m_{\lt(f),\lt(g)}(g),
 $$
is called the \emph{reduction of $f$ modulo~$g$}. We shall often use the notation $f\rightarrow r_g(f)$ for reductions.
\end{definition}

\begin{definition}
Two tree monomials $S$ and $T$ are said to have a \emph{small common multiple}, if there exists a tree monomial $U$ which is divisible by both $S$ and $T$ for which each of its vertices is contained in either $S$ or $T$, and such that the number of vertices of $t(U)$ is less than the total number of vertices of $t(S)$ and $t(T)$. Assume that the leading terms of two elements $f,g\in\T(W)$ have a small common multiple $U$. 
The element
 $$
s_U(f,g):=m_{U,\lt(f)}(f)-\frac{c_f}{c_g} m_{U,\lt(g)}(g),
 $$
is called the \emph{$S$-polynomial of $f$ and $g$} (corresponding to the small common multiple $U$). 
\end{definition}

\begin{definition}
A \emph{Gr\"obner basis} of an ideal $I$ in the free non-symmetric operad is a system $G$ of generators of~$I$ for which the leading tree monomial of every element of~$I$ is divisible by one of the leading terms of elements of~$G$. A Gr\"obner basis $G$ is said to be \emph{reduced} if none of the leading terms of elements of $G$ are divisible by leading terms of other elements of~$G$.  
\end{definition}

One can prove that every ideal of the free non-symmetric operad admits a unique reduced Gr\"obner basis. 

\begin{definition}
A tree monomial is said to be \emph{normal} with respect to a system of elements $G$ if it is not divisible by any of the leading terms of elements of~$G$.
\end{definition}

It is easy to see that iterated reductions modulo elements of some set $G$ allow one to replace every element of a free operad by a linear combinations of tree monomials that are normal with respect to~$G$. In the case of a Gr\"obner basis, this representation produces what is natural to think of as a canonical form of an element, as the following proposition shows.

\begin{proposition}
Suppose that $G$ is a Gr\"obner basis of an ideal~$I\subset\T(W)$. Then normal tree monomials with respect to~$G$ form a basis of the quotient $\T(W)/I$. 
\end{proposition}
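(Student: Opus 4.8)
The plan is to follow the classical Bergman diamond-lemma argument, transported to the setting of tree monomials in the free non-symmetric operad. First I would establish the spanning statement: iterated reductions modulo elements of $G$ rewrite any element of $\T(W)$ as a linear combination of tree monomials normal with respect to $G$, and since the ordering is admissible and every strictly decreasing chain of tree monomials of fixed arity terminates (each component $\T(W)(n)$ has countably many tree monomials and the ordering is well-founded — a tree with fewer leaves is always smaller, and for fixed leaf number there are finitely many monomials), this rewriting process terminates. The images of normal tree monomials therefore span $\T(W)/I$. This part is routine, modulo checking that the reduction procedure indeed lands in $I$, which is immediate since $m_{\lt(f),\lt(g)}(g)$ lies in $I$ whenever $g\in G\subset I$.

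The heart of the proof is linear independence of the normal tree monomials in $\T(W)/I$, equivalently: if a linear combination of normal tree monomials lies in $I$, all coefficients vanish. The standard route is to show that the reduction relation is \emph{confluent}, so that every element has a well-defined normal form, and that two normal tree monomials are congruent modulo $I$ only if they are equal. Confluence is checked locally: by Newman's lemma (termination is already in hand) it suffices to resolve ambiguities, i.e. to verify that whenever a tree monomial $S$ admits two different reductions — coming from two leading terms $\lt(g_1),\lt(g_2)$ both dividing $t(S)$ — the two resulting elements can be further reduced to a common value. The overlaps split into the disjoint case (the two occurrences of $t(g_1)$ and $t(g_2)$ in $t(S)$ share no vertex, resolved trivially by applying the other reduction), the inclusion case (one occurrence sits inside the other), and the genuine overlap case, which is exactly a small common multiple $U$ of $\lt(g_1)$ and $\lt(g_2)$: there the difference of the two reductions of $U$ is, up to scalar, the $S$-polynomial $s_U(g_1,g_2)$, and because $G$ is a Gr\"obner basis, $s_U(g_1,g_2)\in I$ has leading term divisible by some $\lt(g)$, so it reduces to zero. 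Here one uses admissibility to propagate local confluence from $U$ to the ambient monomial $S$: all the extra terms produced stay strictly below $S$, so an induction on the ordering closes the argument.

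Concretely I would run the argument by well-founded induction on the admissible ordering: assume every element whose tree monomials are all strictly smaller than a given monomial $S$ has a unique normal form, and deduce the same for $S$ by resolving the finitely many ambiguities at $S$ as above, invoking the inductive hypothesis on the lower terms. Once every element of $\T(W)$ has a unique normal form $\overline{x}$, the map $x\mapsto \overline{x}$ is $\KK$-linear, kills $I$ (any generator $g\in G$ reduces to $0$, hence so does any element of $I$ by the spanning/confluence argument applied to $I$ itself), and is the identity on normal tree monomials; therefore a linear dependence among normal tree monomials modulo $I$ is carried by this map to the same dependence among them in $\T(W)$, forcing all coefficients to be $0$. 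Combined with the spanning statement, this shows normal tree monomials form a basis of $\T(W)/I$.

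The main obstacle is the genuine-overlap step: making precise what ``inserting $g_i$ in place of $\lt(g_i)$'' does inside an overlap monomial, and checking that the bookkeeping of signs coming from the symmetry isomorphisms in the operadic associativity relations~\eqref{graded-assoc} is consistent, so that the difference of the two one-step reductions of $U$ really is a scalar multiple of $s_U(g_1,g_2)$. The presence of $0$-ary operations adds cases to the overlap analysis (leaves of a tree monomial include vertices labelled by nullary operations), but does not change the structure of the argument. Everything else — termination, the spanning statement, the passage from local to global confluence — is a routine transcription of the associative-algebra diamond lemma, and I would treat it as such rather than spelling out each verification.
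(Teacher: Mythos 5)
The paper does not actually supply a proof of this proposition (it is stated as a consequence of the reduction procedure, with the hard combinatorial work deferred to Theorem~\ref{DiamondLemma}, whose proof is also omitted), so the comparison is with the standard argument the authors clearly have in mind. Your proof is correct in outline, but it proves much more than is needed: the whole confluence apparatus --- Newman's lemma, the case analysis of overlaps, the resolution of small common multiples via $S$-polynomials --- is the content of the Diamond Lemma (Theorem~\ref{DiamondLemma}), i.e.\ of the \emph{criterion} for a generating set to be a Gr\"obner basis. For the proposition itself, where $G$ is already assumed to be a Gr\"obner basis, linear independence is immediate from the definition: if a nonzero linear combination $x$ of normal tree monomials lay in $I$, then $\lt(x)$ would be one of those normal monomials, yet by the defining property of a Gr\"obner basis $\lt(x)$ is divisible by some $\lt(g)$, $g\in G$ --- a contradiction. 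Spanning follows, as you say, from termination of reduction (well-founded induction on the leading term), and needs no confluence either. So your route buys nothing here except a re-proof of Theorem~\ref{DiamondLemma}; the direct argument is shorter and is the one that generalises verbatim from the associative and shuffle-operad settings.

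One small factual slip in your termination argument: it is not true that for a fixed number of leaves there are only finitely many tree monomials --- if $W$ has a nonzero arity-one component, one can stack arbitrarily many unary vertices without changing the leaf count. Well-foundedness of the path-lexicographic ordering still holds, because for a fixed number of leaves the comparison is a finite lexicographic product of degree-lexicographic orderings on words, each of which is a well-order; but the justification should go through the word lengths, not through finiteness of the set of monomials. (For a general admissible ordering in the sense of the paper's definition, well-foundedness is an extra hypothesis one must impose for reductions to terminate; it is satisfied by the ordering actually used.)
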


A useful criterion for a set of generators~$G$ to form a Gr\"obner basis is given by the following version of Bergman's Diamond Lemma~\cite{Bergman78}. The proof is analogous to that for Diamond Lemma and is omitted.

\begin{theorem}\label{DiamondLemma}
Suppose that $G$ is a system of generators of~$I$. Then $G$ forms a Gr\"obner basis if and only if each such S-polynomial of two elements of $G$ can be reduced to zero modulo~$G$. 
\end{theorem}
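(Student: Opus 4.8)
The plan is to run the standard argument behind Bergman's Diamond Lemma~\cite{Bergman78}, in the operadic formulation modelled on~\cite{DK10}, while keeping careful track of the signs produced by the associativity relations~\eqref{graded-assoc}. The starting observation is that, after expanding into tree monomials, every element of the ideal $I$ generated by $G$ is a finite combination $\sum_\alpha c_\alpha\, m_{S_\alpha,T_\alpha}(g_\alpha)$ with $g_\alpha\in G$, $T_\alpha=\lt(g_\alpha)$, and $S_\alpha$ a tree monomial divisible by $T_\alpha$; by admissibility $\lt\!\big(m_{S_\alpha,T_\alpha}(g_\alpha)\big)=S_\alpha$, with leading coefficient $c_{g_\alpha}$. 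For the ``only if'' direction I would argue directly: if $U$ is a small common multiple of $\lt(f)$ and $\lt(g)$ for $f,g\in G$, then $s_U(f,g)\in I$, reducing it modulo $G$ terminates (each reduction strictly lowers the leading tree monomial and the arity is preserved, so the admissible ordering is well-founded on monomials of that arity), and the remainder is normal with respect to $G$ while still lying in $I$; if it were nonzero its leading term would be divisible by the leading term of some element of $G$, contradicting normality, so it must be zero.

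For the ``if'' direction, assume every $S$-polynomial of elements of $G$ reduces to zero, fix a nonzero $f\in I$, and write $f=\sum_\alpha c_\alpha\, m_{S_\alpha,T_\alpha}(g_\alpha)$ as above. Since $\lt(f)\le\max_\alpha S_\alpha$ always, and the chosen admissible ordering is a well-ordering (as is the path-lexicographic one), I would pick such a representation minimizing $M:=\max_\alpha S_\alpha$ and, among those, minimizing the number $k$ of indices with $S_\alpha=M$. If $k=1$ then $M$ survives in $f$ with nonzero coefficient, so $\lt(f)=M$ is divisible by $\lt(g_\alpha)$ and we are done; hence the whole point is to show that $k\ge 2$ is impossible.

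So suppose $k\ge 2$ and choose $\alpha_1\ne\alpha_2$ with $S_{\alpha_1}=S_{\alpha_2}=M$; let $T'=\lt(g_{\alpha_1})$ and $T''=\lt(g_{\alpha_2})$, located inside $M$ in the positions dictated by the two summands. I would split into the usual three cases according to how these two copies meet: (i) vertex-disjoint; (ii) overlapping with neither containing the other; (iii) one containing the other. In case (i) the two insertions commute up to a sign governed by~\eqref{graded-assoc}, and expanding $g_{\alpha_1}$ and $g_{\alpha_2}$ against the simultaneous insertion of both yields an identity $m_{M,T'}(g_{\alpha_1})=\lambda\, m_{M,T''}(g_{\alpha_2})+(\text{terms }m_{S,\lt(g)}(g),\ g\in G,\ S<M)$. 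In cases (ii) and (iii) the union of the two copies (in (iii), the larger copy) is a small common multiple $U$ of $T'$ and $T''$ which divides $M$; the hypothesis writes $s_U(g_{\alpha_1},g_{\alpha_2})=\sum_\beta d_\beta\, m_{V_\beta,\lt(g'_\beta)}(g'_\beta)$ with $g'_\beta\in G$ and $V_\beta\le\lt(s_U)<U$ (the $U$-terms cancel in $s_U$ by the definition of the leading coefficients), and inserting this identity into the operation $m_{M,U}(-)$ — using again that inserting a fixed operation at a fixed position via different composition orders gives the same free-operad element up to a combinatorially determined sign, and that admissibility turns $V_\beta<U$ into $m_{M,U}(V_\beta)<M$ — produces an identity of the same shape. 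In all three cases, substituting this back replaces $c_{\alpha_1}m_{S_{\alpha_1},T_{\alpha_1}}(g_{\alpha_1})+c_{\alpha_2}m_{S_{\alpha_2},T_{\alpha_2}}(g_{\alpha_2})$ by $(\lambda c_{\alpha_1}+c_{\alpha_2})\,m_{M,T''}(g_{\alpha_2})$ plus strictly lower terms, giving a new representation of the same $f$ with the same $M$ but smaller $k$ — contradicting minimality. Therefore $k\le 1$, which finishes the proof.

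The hard part is this ``if'' direction, and the two delicate points are: (1) setting up the complexity measure $(M,k)$ so that the minimization is well-founded — this is exactly where one needs the admissible ordering to be a well-ordering; and (2) running the overlap case analysis with every sign under control. The genuinely crucial ingredient, and the reason the signs in~\eqref{graded-assoc} are stressed throughout the paper, is the assertion that inserting a given operation at a given position in a given planar tree yields a well-defined element of the free operad up to a sign depending only on the combinatorics; this is what makes the ``double-insertion'' rewritings in cases (i), (ii) and (iii) legitimate. I would also flag that the vertex-disjoint case (i) is intentionally \emph{not} an $S$-polynomial — such a common multiple violates the strict inequality on the number of vertices in the definition of a small common multiple — so one must verify by hand, as above, that disjoint ambiguities always resolve.
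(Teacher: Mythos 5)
Your argument is correct and is precisely the standard Diamond Lemma argument (in its operadic form, as in \cite{DK10} and \cite{Bergman78}) that the paper itself invokes: the paper omits the proof, stating only that it is analogous to that of the Diamond Lemma. Your identification of the three delicate points --- well-foundedness of the admissible ordering for the minimal-representation induction, the sign bookkeeping coming from~\eqref{graded-assoc}, and the fact that vertex-disjoint ambiguities are not $S$-polynomials and must be resolved by hand --- matches exactly what a full write-up would have to address.
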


This theorem allows one to come up with an algorithm, analogous to Buchberger's algorithm in the commutative case~\cite{Buchberger65}, that computes for each operad presented by generators and relations its reduced Gr\"obner basis. In what follows, a computation of a Gr\"obner basis of a certain non-symmetric operad will be done from the beginning to the end, which will hopefully be instructional enough to the reader. 

\subsection{Gr\"obner bases for algebras over non-symmetric operads}
Unlike the case of symmetric operads where first steps in studying an operad can be done without symmetries by passing to shuffle operads~\cite{DK10} but for algebras over a given operad there is no clear way to define a theory of Gr\"obner bases, for non-symmetric operads one can handle the corresponding algebras as well. 

Let $\P=\T(W)/I$ be a non-symmetric operad, and $A$ be an algebra over $\P$. By a slight abuse of notation, we think of~$A$ as of a collection concentrated in arity~$0$. The following construction is an explicit adaptation for the non-symmetric case of the construction of the \emph{enveloping operad} for the pair $(\P,A)$, see~\cite{BM09} and references therein. 

\begin{definition}
The \emph{extension of constants in~$\P$ by~$A$}, denoted $\P\ltimes A$, is the operad 
 $$
\T(W\oplus A)/(I\oplus I_A), 
 $$
where $I_A$ consists of all the relations 
 $$
\mu\circ(a_1,\ldots,a_n)=\mu(a_1,\ldots,a_n)
 $$ 
with $\mu\in W(n)$, and the elements $a_1,\ldots,a_n\in A$ viewed as operations of arity~$0$ in $\P\ltimes A$ on the left-hand side (so that the non-symmetric composition $\mu\circ(a_1,\ldots,a_n)\in\T(W\oplus A)\circ\T(W\oplus A)$ can be evaluated), and as elements of the algebra~$A$ on the right-hand side (so that $\mu$ acts on these elements via the homomorphism $\P\to\End_A$).
\end{definition}

\begin{proposition}
We have $(\P\ltimes A)(0)\cong\P(0)\oplus A$.  
\end{proposition}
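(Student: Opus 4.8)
The plan is to compute $(\P\ltimes A)(0)$ using the Gröbner basis machinery developed above, by producing a monomial replacement for the operad $\P\ltimes A = \T(W\oplus A)/(I\oplus I_A)$ and then reading off the arity-zero component from the normal tree monomials. First I would fix a presentation $\P=\T(W)/I$ together with a Gröbner basis $G$ of $I$, and choose a basis of $A$ as a vector space; then I would set up an admissible ordering on tree monomials of $\T(W\oplus A)$ refining the given ordering on $\T(W)$ and ordering the new arity-zero generators (the elements of the chosen basis of $A$) so that the relations in $I_A$ have leading term the tree monomial $\mu\circ(a_1,\dots,a_n)$ (the composite being "taller" than the single arity-zero generator $\mu(a_1,\dots,a_n)$). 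This is possible because an admissible order must make composition increasing, so a tree with more vertices is automatically larger.

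Next I would argue that $G$ together with the relations $R_A := \{\mu\circ(a_1,\dots,a_n)-\mu(a_1,\dots,a_n) : \mu\in W(n),\ a_i\in A\}$ forms a Gröbner basis of $I\oplus I_A$, by checking via Theorem~\ref{DiamondLemma} that all $S$-polynomials reduce to zero modulo $G\cup R_A$. There are three types of overlaps to consider: (i) $S$-polynomials of two elements of $G$, which reduce to zero since $G$ is already a Gröbner basis of $I$; (ii) $S$-polynomials of an element of $G$ with a relation in $R_A$, coming from an overlap of $\lt(g)$ with the top vertex of $\mu\circ(a_1,\dots,a_n)$ — here reducing the $R_A$-relation first rewrites everything into elements of $A$ acted on by $\P$, and confluence follows from the fact that $A$ is a genuine $\P$-algebra (so the two ways of evaluating agree); and (iii) $S$-polynomials of two relations in $R_A$, which only overlap when one sits inside an arity-zero argument slot of another, and again both reductions land on the same element of $A$ because the $\P$-action is well-defined. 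Once this is established, the normal tree monomials with respect to $G\cup R_A$ form a basis of $\P\ltimes A$.

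The key observation is then that a normal tree monomial of arity~$0$ cannot contain any vertex labelled by a generator $\mu\in W$ all of whose inputs are (arity-zero) leaves from $A$, since such a configuration is divisible by the leading term of an $R_A$-relation; iterating, a normal arity-zero tree monomial either is a single leaf labelled by an element of the chosen basis of $A$, or contains no leaf from $A$ at all and hence lies in $\T(W)$, in which case being normal with respect to $G$ means it descends to a basis element of $\P(0)$. Conversely both kinds of monomials are normal. This gives a vector-space decomposition of $(\P\ltimes A)(0)$ into the span of the basis of $A$ and the span of the normal arity-zero monomials of $\T(W)/I$, i.e.\ $(\P\ltimes A)(0)\cong A\oplus\P(0)$, and one checks the summand $A$ is closed under the induced $\P$-action so the isomorphism is canonical (independent of the chosen basis of $A$).

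The main obstacle I expect is the confluence check in case~(ii): one must verify that when a relation of $G$ (possibly involving arity-zero operations of $\P$, since the theory explicitly allows $0$-ary operations) overlaps near the root of a composite $\mu\circ(a_1,\dots,a_n)$, both reduction paths terminate at the same element; this is exactly where the associativity relations~\eqref{graded-assoc} with their signs, and the hypothesis that $\P\to\End_A$ is an honest operad morphism, get used, and keeping track of signs from the symmetry isomorphisms is the delicate point. A cleaner alternative, which I would fall back on if the Gröbner computation gets unwieldy, is to define directly a map $\P(0)\oplus A\to(\P\ltimes A)(0)$ and an inverse using the universal property of $\P\ltimes A$ together with the $\P$-algebra structure of $\P(0)\oplus A$ obtained by declaring $A$ an ideal on which $\P$ acts as given and $\P(0)$ acting through $\P$; but the Gröbner-basis route has the advantage of fitting the paper's narrative and simultaneously yielding a basis.
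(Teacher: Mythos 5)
The paper states this proposition without proof, so your argument has to stand on its own, and its central step fails. The claim that $G\cup R_A$ is already a Gr\"obner basis of $I\oplus I_A$ is false in general, and the failure is exactly in your case~(ii). Take $\P$ to be the operad of non-unital associative algebras, $G=\{\mu_2\circ_1\mu_2-\mu_2\circ_2\mu_2\}$, and let $A$ have basis $\{b_i\}$ with structure constants $c_{ij}^k$. The small common multiple of $\mu_2\circ_1\mu_2$ with $\mu_2\circ(b_i,b_j)$ is the arity-one monomial $\mu_2(\mu_2(b_i,b_j),-)$; one reduction path yields $\sum_k c_{ij}^k\,\mu_2(b_k,-)$ and the other yields $\mu_2(b_i,\mu_2(b_j,-))$. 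Both outcomes are normal with respect to $G\cup R_A$ (the inner corolla $\mu_2(b_j,-)$ has a free input in its second slot, so no leading term of $R_A$ divides it), so this S-polynomial is a nonzero combination of normal monomials and does \emph{not} reduce to zero. Your justification --- ``confluence follows from the fact that $A$ is a genuine $\P$-algebra, so the two ways of evaluating agree'' --- conflates membership in the ideal (which every S-polynomial has automatically) with reducibility to zero modulo the given set, which is the actual content of Theorem~\ref{DiamondLemma}. The paper itself exhibits precisely this phenomenon in the proof of Theorem~\ref{thm:presentationKDinfty}, where the overlap of associativity with $\mu_2(e_i,e_j)=0$ forces the \emph{new} Gr\"obner basis element $\mu_2(e_i,\mu_2(e_j,\id))=0$. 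A correct Gr\"obner-theoretic proof must therefore complete $G\cup R_A$ and then argue that the completion introduces no new leading terms that would kill additional arity-zero normal monomials; that extra step is missing.

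A second gap: your concluding dichotomy (``a normal arity-zero monomial is either a single $A$-leaf or lies in $\T(W)$'') breaks down when $W(0)\neq 0$. The monomial $\mu(a,c)$ with $\mu\in W(2)$, $a\in A$ and $c\in W(0)$ contains no vertex all of whose inputs are $A$-leaves, so it is normal for $R_A$, yet it is neither a single $A$-leaf nor an element of $\T(W)$; your iteration argument silently assumes every leaf-most vertex below a $W$-vertex is an $A$-leaf. (This is arguably a defect of the proposition as stated --- in the paper's application $\P=\NA_{2,N}$ has $\P(0)=0$, where the issue disappears --- but a proof of the general statement must confront it.) The sound route is the one you relegate to a closing remark: surjectivity of $\P(0)\oplus A\to(\P\ltimes A)(0)$ follows from rewriting with $R_A$ alone, and injectivity from a retraction built out of the structure map $\P\to\End_A$, with no Diamond Lemma required. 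That alternative should be the proof, not the fallback.
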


Consequently, it becomes clear how to develop Gr\"obner bases for algebras over any non-symmetric operad: one should pass to the extension of constants, compute the operadic Gr\"obner basis, and restrict it to arity~$0$. This results in the following theorem.

\begin{theorem}\label{basis-extension}
Let $\P=\T(W)/I$ be a non-symmetric operad, and let $A$ be an algebra over~$\P$. If $G$ is the reduced Gr\"obner basis of the operad~$\P\ltimes A$, then the normal tree monomials of arity zero not belonging to~$\P$ form a basis of~$A$.
\end{theorem}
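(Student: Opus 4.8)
The plan is to reduce the statement to the two results already in place: the Proposition asserting that the tree monomials normal with respect to a Gr\"obner basis form a basis of the quotient operad, and the Proposition asserting $(\P\ltimes A)(0)\cong\P(0)\oplus A$. Write $\mathcal I:=I\oplus I_A$, so that $\P\ltimes A=\T(W\oplus A)/\mathcal I$ and $G$ is the reduced Gr\"obner basis of $\mathcal I$. First I would apply the basis Proposition to this presentation: the tree monomials normal with respect to $G$ form a basis of the collection $\P\ltimes A$, and since being normal and having a prescribed arity are properties of the underlying tree, restricting to the arity-zero component shows that the set $B$ of \emph{normal arity-zero tree monomials} is a basis of $(\P\ltimes A)(0)$.

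Next I would split $B=B_W\sqcup B_A$, where $B_W$ consists of those normal arity-zero monomials built solely from labels in $W$ (the ones ``belonging to $\P$'') and $B_A$ of those carrying at least one label from $A$. Using the identification $(\P\ltimes A)(0)=\P(0)\oplus A$, in which $\P(0)$ is the image of the canonical map $\P\to\P\ltimes A$ induced by $\T(W)\hookrightarrow\T(W\oplus A)$, the elements of $B_W$ clearly lie in the summand $\P(0)$. The heart of the argument is to prove that $B_W$ is in fact a \emph{basis} of $\P(0)$: linear independence is automatic since $B_W\subseteq B$, so only spanning must be checked.

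For spanning I would introduce the ``forget $A$'' operad morphism $\rho\colon\T(W\oplus A)\to\T(W)$ that fixes the generators in $W$ and sends every generator in $A$ to zero. Its kernel is spanned by the tree monomials carrying an $A$-label, so $\rho(T)=T$ for $T\in B_W$ and $\rho(T)=0$ for $T\in B_A$; moreover every generator of $I_A$ has an $A$-label in each of its terms, whence $\rho(I_A)=0$, while the pure-$W$ generators of $I$ are fixed, so that $\rho(\mathcal I)\subseteq I$. Now given $p\in\P(0)$, represent it by a pure-$W$ element $x\in\T(W)(0)$ and let $\sum_{T\in B}c_T\,T$ be its normal form in $\P\ltimes A$, so that $x-\sum_{T\in B}c_T\,T\in\mathcal I$. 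Applying $\rho$ and using the properties above yields $x-\sum_{T\in B_W}c_T\,T\in I$, i.e. $p=\sum_{T\in B_W}c_T\,[T]$ in $\P(0)$. Hence $B_W$ spans, and is therefore a basis of, $\P(0)$.

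Finally, since $B=B_W\sqcup B_A$ is a basis of $\P(0)\oplus A$ and $B_W$ is a basis of the summand $\P(0)$, the span of $B_A$ is a complement to $\P(0)$, so the projection onto $A$ along $\P(0)$ carries $B_A$ bijectively onto a basis of $A$; under the identification of the cited Proposition this is exactly the assertion that the normal arity-zero tree monomials not belonging to $\P$ form a basis of $A$. The one genuinely delicate point, and the reason the ``forget $A$'' morphism is needed, is the spanning step for $B_W$: a priori a reduction of a pure-$W$ element modulo $G$ could introduce $A$-labels (this is precisely where the presence of $0$-ary operations in $W$ could obscure the picture), and $\rho$ is what guarantees that, after projecting away all $A$-contributions, one lands back in $(\T(W)/I)(0)=\P(0)$.
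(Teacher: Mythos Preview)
The paper does not actually prove this theorem: it is stated immediately after the sentence ``This results in the following theorem'', with the understanding that it follows at once from the two Propositions you invoke (normal monomials give a basis of the quotient, and $(\P\ltimes A)(0)\cong\P(0)\oplus A$). Your argument is correct and is precisely the unpacking of that implicit claim.

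The one place where you go beyond what the paper says is the spanning argument for $B_W$ via the ``forget~$A$'' retraction $\rho\colon\T(W\oplus A)\to\T(W)$. This is a genuine detail: without it one would have to argue separately that the reduced Gr\"obner basis of $\mathcal I$ restricts well to pure-$W$ monomials, which is not automatic. Your use of $\rho$ neatly sidesteps that by working at the level of ideals ($\rho(I_A)=0$, $\rho(I)\subseteq I$) rather than at the level of Gr\"obner bases. Note, incidentally, that in the paper's only application (the operad $\NA_{2,N}$) one has $W(0)=0$ and hence $\P(0)=0$, so $B_W=\varnothing$ and this whole step becomes vacuous; your treatment is the honest one for the theorem as stated in full generality.
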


In the case of the operad of associative algebras, this leads to the standard notion of a monomial replacement coming from the theory of Gr\"obner--Shirshov bases~\cite{Bokut76}, whereas in various other case this produces new definitions that are to be explored. To the best of our knowledge, even for the simplest case of algebras with two associative products satisfying no identities with each other, the corresponding Gr\"obner bases theory is not known.

\section{An operad acting on reduced $\A_{2,N}$-algebras}\label{sec:operadB}

The general approach of the operad theory is to express universal results in terms of relations that operations satisfy when they are evaluated on any elements. In this section, we suggest relations to provide a replacement for the property of an $\A_{2,N}$-algebra to be reduced. Namely, we introduce a quotient of the operad $\A_{2,N}$ that naturally acts on all reduced $\A_{2,N}$-algebras. We conclude with the description of its reduced Gr\"obner basis. 

\begin{definition}
The operad $\NA_{2,N}$ is generated by two operations, a binary operation $\mu_2$ and an $N$-ary operation $\mu_N$ that satisfy the identities
\begin{gather}
\mu_2\star\mu_2=0,\label{mu22}\\
\mu_2\star\mu_N+\mu_N\star\mu_2=0,\label{mu2N}\\
\mu_N\circ_i\mu_N=0,  \quad \text{for }\  i=1,\ldots,N \ .\label{muNN} 
\end{gather}
\end{definition}

\begin{proposition}
The operations of any reduced $\A_{2,N}$-algebra satisfy the relations of the operad $\NA_{2,N}$.
\end{proposition}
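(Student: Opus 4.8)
The plan is to verify the three defining relations \eqref{mu22}, \eqref{mu2N}, \eqref{muNN} of $\NA_{2,N}$ directly, evaluating both sides on arbitrary elements of a reduced $\A_{2,N}$-algebra $E$ and using the degree constraints built into the reducedness condition. Recall that for a reduced $\A_{2,N}$-algebra the operation $\mu_2$ lands in degree $0$ on degree-$0$ inputs and in odd degree only when applied to one even and one odd argument, while $\mu_N$ is only non-zero when \emph{all} $N$ arguments have odd degree, and then its output has degree $N\cdot 1 + (2-N) = 2$, which is even.

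First I would observe that \eqref{mu22} holds automatically: it is part of the definition of an $\A_{2,N}$-algebra, so nothing needs to be checked. Next I would treat \eqref{muNN}. Here the point is purely a degree count: $\mu_N\circ_i\mu_N$ takes $2N-1$ inputs, and for it to be non-zero all of these must be of odd degree; but then the inner $\mu_N$ produces an output of \emph{even} degree $2$, which is fed into the $i$th slot of the outer $\mu_N$ — contradicting the requirement that all inputs of the outer $\mu_N$ be odd. Hence $\mu_N\circ_i\mu_N$ vanishes on all inputs, for each $i=1,\dots,N$. Note that this is strictly stronger than the $\A_{2,N}$-relation $\mu_N\star\mu_N=0$, which only forces the signed sum $\sum_i \pm\,\mu_N\circ_i\mu_N$ to vanish; reducedness kills each summand individually.

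The remaining relation \eqref{mu2N} is the one requiring a genuine (if short) computation, and it is where I expect the only real subtlety. We already know $\mu_2\star\mu_N+\mu_N\star\mu_2=0$ in any $\A_{2,N}$-algebra, so strictly speaking there is nothing new to prove — but I would double-check that the relation as written in the operad $\NA_{2,N}$ (Definition, \eqref{mu2N}) is literally the same signed combination of partial compositions as in the definition of $\A_{2,N}$, i.e. that passing to the quotient $\NA_{2,N}$ does not silently strengthen \eqref{mu2N} the way it strengthens the $\mu_N\star\mu_N$ relation into \eqref{muNN}. Since \eqref{mu2N} is stated verbatim as $\mu_2\star\mu_N+\mu_N\star\mu_2=0$, it is inherited unchanged. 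Thus the proof amounts to: \eqref{mu22} and \eqref{mu2N} are inherited from the $\A_{2,N}$-structure, and \eqref{muNN} follows from the degree/parity obstruction above together with the reducedness of $E$. The main (and essentially only) obstacle is making the parity bookkeeping for \eqref{muNN} airtight — in particular confirming that the output degree of $\mu_N$ on all-odd inputs is even, so that it can never be resubstituted into $\mu_N$; everything else is formal.
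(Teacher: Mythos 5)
Your proposal is correct and follows essentially the same route as the paper: relations \eqref{mu22} and \eqref{mu2N} are inherited verbatim from the $\A_{2,N}$-structure, and \eqref{muNN} follows from the parity observation that $\mu_N$ vanishes unless all inputs are odd, while its output on all-odd inputs is even. One tiny point of care: your degree count $N\cdot 1+(2-N)=2$ assumes all inputs have degree exactly $1$; for arbitrary odd degrees $d_1,\dots,d_N$ the output degree is $\sum_i d_i+2-N\equiv N+2-N\equiv 0\pmod 2$, so the parity conclusion still holds in general.
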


\begin{proof}
Indeed, $\mu_N(a_1,\ldots,a_N)=0$ when at least one of $a_i$ is not of odd degree. Also, in the case when all the elements $a_i$ are of odd degree, the element $\mu_N(a_1,\ldots,a_N)$ is of even degree. We immediately conclude that the operation $\mu_N\circ_i\mu_N$ is identically zero.
\end{proof}

\begin{remark}
For the condition of being a reduced $\A_{2,N}$-algebra in the sense of~\cite{LHL} (which is similar to the one used in this paper but includes congruences modulo~$3$) relevant for the concept of ``piecewise-Koszul algebras'', one can show in the exact same way that the corresponding algebras carry canonical $\NA_{2,N}$-algebra structures.
\end{remark}

\begin{corollary}\label{A2N-constr}
For every weight graded algebra $A$ concentrated in degree zero, we consider the vector space $E$ whose component of degree~$k$ is defined as
 $$
E^k=
\begin{cases}
A_{N\frac{k}{2}}, \phantom{aaai}\text{ if }k\text{ is even},\\
A_{N\frac{k-1}{2}+1}, \text{ if }k\text{ is odd}.
\end{cases} 
 $$
The operations $\mu_2$ and $\mu_N$ given by the product on $A$ by
\begin{gather*}
\mu_2 \colon 
\begin{cases}
A_{iN} \otimes A_{jN} \to A_{(i+j)N},    \\
A_{iN+1} \otimes A_{jN} \to A_{(i+j)N+1}, \\
A_{iN} \otimes  A_{jN+1} \to A_{(i+j)N+1},
\end{cases}\\
\mu_N \colon A_{k_1 N+1} \otimes \cdots \otimes A_{k_N N+1} \to A_{(k_1+\cdots+k_N+1)N}   
\end{gather*}
and defined to be zero for all other choices of arguments, endow $E$ with a $\NA_{2,N}$-algebra structure.
\end{corollary}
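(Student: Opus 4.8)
The plan is to verify directly that the operations $\mu_2$ and $\mu_N$ on $E$ satisfy the three defining relations \eqref{mu22}, \eqref{mu2N}, \eqref{muNN} of the operad $\NA_{2,N}$; reducedness is not even needed for this, though it holds automatically. The starting observation is the dictionary between gradings: by construction $A_m$ sits in $E$ in homological degree $\tfrac{2m}{N}$ when $m\equiv 0\pmod N$ and in degree $\tfrac{2(m-1)}{N}+1$ when $m\equiv 1\pmod N$, so an element of $E$ has even homological degree exactly when its weight is $\equiv 0\pmod N$ and odd homological degree exactly when its weight is $\equiv 1\pmod N$. A one-line count of weights and degrees shows that $\mu_2$ has degree $0$ and $\mu_N$ has degree $2-N$, and that both are restrictions of the product of $A$: $\mu_2(a,b)$ equals the product $ab$ computed in $A$ whenever its weight is $\equiv 0$ or $1\pmod N$ and is $0$ otherwise, while $\mu_N(a_1,\dots,a_N)$ equals the product $a_1\cdots a_N$ in $A$ whenever every $a_i$ has weight $\equiv 1\pmod N$ (which forces the output to have weight $\equiv 0\pmod N$) and is $0$ otherwise.

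Two of the relations are immediate. For \eqref{muNN}, the inner $\mu_N$ outputs an element of weight $\equiv 0\pmod N$, and plugging it into any slot of the outer $\mu_N$ gives zero, since $\mu_N$ vanishes unless all of its inputs have weight $\equiv 1\pmod N$ and $0\not\equiv 1\pmod N$ because $N\geq 3$; hence $\mu_N\circ_i\mu_N=0$ for every $i$, and a fortiori $\mu_N\star\mu_N=0$. For \eqref{mu22}, since $\mu_2$ has degree zero this is plain associativity, $\mu_2(\mu_2(a,b),c)=\mu_2(a,\mu_2(b,c))$, which follows from associativity of $A$ after one checks that the only residue a threefold product of $E$-weights can take outside $\{0,1\}\bmod N$ is $2\bmod N$, occurring exactly when two of $a,b,c$ have weight $\equiv 1$, and that in each such case both bracketings are forced to vanish --- a two-line case inspection.

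It remains to establish \eqref{mu2N}, equivalently the element-wise identity \eqref{B-on-elements}; this is the heart of the matter, and it is cleanest to treat it as an operadic identity in $\End_E$, where the signs are entirely fixed by the pre-Lie formula and the only datum controlling whether a summand vanishes is the tuple of weights modulo $N$. Evaluate on homogeneous $a_1,\dots,a_{N+1}\in E$ and let $s$ be the number of indices with weight $\equiv 0\pmod N$. If $s\geq 2$, then in every summand some occurrence of $\mu_N$ has an argument that is not of weight $\equiv 1\pmod N$, so the summand vanishes. If $s=0$, only the two $\mu_2\circ\mu_N$ summands survive, and after the Koszul sign incurred when $\mu_N$ (of degree $2-N$) is inserted into the second slot of $\mu_2$ past an odd-degree element, they read $(a_1\cdots a_N)a_{N+1}$ and $-a_1(a_2\cdots a_{N+1})$, which cancel by associativity of $A$. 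If $s=1$, with the weight-$\equiv0$ argument at position $j_0$, the surviving summands are the one among the first two whose $\mu_N$-block avoids $a_{j_0}$ (present only when $j_0\in\{1,N+1\}$) together with the two $\mu_N\circ_i\mu_2$ summands in which the bracket $\mu_2$ straddles position $j_0$; their structural signs are consecutive powers of $-1$, hence opposite, while the underlying elements of $A$ coincide by associativity, since the two $N$-tuples fed to $\mu_N$ differ only in where the factor $a_{j_0}$ is parenthesised inside a product. Thus all surviving terms cancel in pairs, \eqref{mu2N} holds, and $E$ is a $\NA_{2,N}$-algebra.

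The one genuinely delicate point --- and the step I expect to be the main obstacle --- is this last sign reconciliation in \eqref{mu2N}: one must match the structural signs of \eqref{B-on-elements} against the Koszul signs produced by sliding the degree-$(2-N)$ operation $\mu_N$ past the elements to its left and by the odd homological degree of every weight-$\equiv 1\pmod N$ element. This is exactly why one should run the verification operadically rather than element-by-element; the simplification that makes it go through is that in the case $s=1$ the argument $a_{j_0}$ common to the two surviving $\mu_N$-terms has \emph{even} homological degree, so moving it around inside a product changes no Koszul sign. Finally, one may note that $E$ visibly meets the reducedness conditions --- the product of two weight-$\equiv 1$ elements has weight $\equiv 2\pmod N$, a component absent from $E$, so $\mu_2$ kills such pairs, and $\mu_N$ kills any argument of even degree --- so $E$ is a reduced $\A_{2,N}$-algebra and the preceding proposition re-derives the $\NA_{2,N}$-structure; the direct computation above is, however, shorter and makes clear that the relations hold for purely combinatorial reasons, independently of any Koszulness of $A$.
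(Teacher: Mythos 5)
Your argument is correct. The paper's own proof is a one-liner: it observes that $E$ is a reduced $\A_{2,N}$-algebra and invokes the preceding proposition, which says that the operations of any reduced $\A_{2,N}$-algebra automatically satisfy the relations of $\NA_{2,N}$. You instead verify \eqref{mu22}, \eqref{mu2N} and \eqref{muNN} directly by weight bookkeeping modulo~$N$. The underlying computation is essentially the one the paper's ``easily seen'' elides --- establishing that $E$ is even an $\A_{2,N}$-algebra already requires the sign reconciliation in \eqref{mu2N} that you single out as the delicate point --- so the two proofs differ more in organisation than in substance; what your version buys is that the check is actually on the page and makes transparent that only the weight residues, not any Koszulness of~$A$, are used, while the paper's version buys brevity and isolates reducedness as the operative hypothesis. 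Two cosmetic slips, neither affecting the conclusion: in the associativity check, three arguments of weight $\equiv 1$ give total weight $\equiv 3\pmod N$, which for $N>3$ is a residue outside $\{0,1\}$ other than~$2$ (and for $N=3$ is $\equiv 0$, yet both bracketings still vanish because the intermediate binary products land in the missing weight-$\equiv 2$ component); and in the case $s=1$ with $j_0\in\{1,N+1\}$ only one straddling summand $\mu_N\circ_i\mu_2$ survives, paired with one of the first two terms of \eqref{B-on-elements}, rather than two straddling summands --- your subsequent sign-and-associativity cancellation covers both configurations, so the conclusion stands.
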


\begin{proof}
It is clear, since this algebra is easily seen to be a reduced $\A_{2,N}$-algebra. 
\end{proof}

\begin{corollary}
For every $N$-Koszul algebra, the operations on its Yoneda $\A_\infty$-algebra satisfy the relations of the operad~$\NA_{2,N}$. 
\end{corollary}

\begin{proof}
Indeed, by Theorem~\ref{computeproducts}, the Yoneda $\A_\infty$-algebra of an $N$-Koszul algebra~$A$ is the algebra $E$ obtained from the algebra $A^\vee$ by the construction we just described above.
\end{proof}

\begin{remark}
The Yoneda $\A_\infty$-algebra of the algebra $\KK\langle x,y\mid x^2=xy=y^N=0\rangle$ turns out to be a $\NA_{2,N}$-algebra which is not reduced as an $\A_{2,N}$-algebra (in any sense of the word ``reduced'' used in the literature), contrary to the feeling that the previous statements might create. This example suggests that the notion of a $\NA_{2,N}$-algebra is well suited for generalisations of the $N$-Koszul duality theory including relations of weight~$2$ and~$N$. This algebra is indeed $2$-$N$-Koszul in the sense of Green and Marcos~\cite{GM11}. We shall explore this observation further in a sequel paper.
\end{remark}

It turns out that Yoneda $\A_\infty$-algebras of $N$-Koszul algebras admit compact presentations as algebras over the operad $\NA_{2,N}$. In order to prove that, we shall study the operad $\NA_{2,N}$ in more detail, exhibiting its reduced Gr\"obner basis. In what follows, we use the path-lexicographic ordering on tree monomials, assuming that $\mu_2$ is greater than $\mu_N$ lexicographically.

\begin{theorem}\label{thm:GBB2N}
The reduced Gr\"obner basis of the operad $\NA_{2,N}$ is obtained from its defining relations \eqref{mu22}, \eqref{mu2N}, and \eqref{muNN} by adjoining, for each $k\ge 1$ and for each $i=1,\ldots,N-1$, the relation
\begin{gather}\label{tower}
\mu_N\circ_i(\mu_2^{(k)}\circ_{k+1}\mu_N)=(-1)^{N(i-1)}(\mu_N\circ_N(\mu_2\circ_2\mu_N))\circ_i\mu_2^{(k-1)},
\end{gather}
with the operations $\mu_2^{(k)}$ defined inductively by $\mu_2^{(0)}=\id$, $\mu_2^{(k+1)}=\mu_2(\id,\mu_2^{(k)})$.
\end{theorem}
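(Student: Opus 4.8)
The plan is to invoke the operadic Diamond Lemma (Theorem~\ref{DiamondLemma}): run the Buchberger procedure starting from the defining relations \eqref{mu22}, \eqref{mu2N} and \eqref{muNN}, reduce each $S$-polynomial, adjoin the leading terms of those that do not reduce to zero, and show the process closes up exactly on the stated list. A preliminary step is to pin down leading terms. Writing out leaf-word sequences for the path-lexicographic order with $\mu_2>\mu_N$, the leading terms of \eqref{mu22}, \eqref{mu2N} and \eqref{muNN} (the last for each $i$) are $\mu_2\circ_1\mu_2$, $\mu_2\circ_1\mu_N$ and $\mu_N\circ_i\mu_N$: among the summands of $\mu_2\star\mu_N+\mu_N\star\mu_2$ the monomial $\mu_2\circ_1\mu_N$ wins because its leftmost leaf carries the longest word, namely $\mu_2\mu_N$, which is moreover lexicographically above $\mu_N\mu_2$. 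The same comparison shows that the leading term of \eqref{tower} is its left-hand side $\mu_N\circ_i(\mu_2^{(k)}\circ_{k+1}\mu_N)$, the two sides first diverging, by one extra letter $\mu_N$, at the deepest leaf of the $\mu_2$-comb.

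Next, the $S$-polynomials among the defining relations. The pair $\mu_2\circ_1\mu_2,\mu_N\circ_i\mu_N$ has no small common multiple, and $\mu_2\circ_1\mu_N$ has no self-overlap (the relevant shapes cannot share a vertex within the allowed number of vertices); the $S$-polynomial of two relations $\mu_N\circ_i\mu_N$ and $\mu_N\circ_j\mu_N$ is zero outright; the self-overlap of \eqref{mu22} is the classical associativity confluence; and the overlap $\mu_2\circ_1(\mu_2\circ_1\mu_N)$ of \eqref{mu22} with \eqref{mu2N} reduces to zero by inspecting its two rewriting paths. The decisive overlap is the small common multiple $U=\mu_2\circ_1(\mu_N\circ_\ell\mu_N)$ of \eqref{mu2N} with \eqref{muNN} (for a fixed $\ell$): after reduction modulo the defining relations, every term exhibiting a $\mu_N$-over-$\mu_N$ pattern vanishes and every term exhibiting a $\mu_2$-over-$\mu_N$ pattern is rewritten by \eqref{mu2N}, and what is left is, up to a nonzero scalar and terms reducing to zero, a linear combination of the relations \eqref{tower} with $k=1$; letting $\ell$ range over $1,\dots,N$ one obtains precisely the leading terms $\mu_N\circ_i(\mu_2\circ_2\mu_N)$ for all $i=1,\dots,N-1$. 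These are the new elements to adjoin at this stage.

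Then one iterates. The crucial new overlaps are those of the leading term of \eqref{tower} (for given $k,i$) with $\mu_2\circ_1\mu_2$, obtained by grafting a $\mu_2$ onto the first input of the top $\mu_2$ of the comb: straightening the resulting left-comb by associativity produces on one side the leading term $\mu_N\circ_i(\mu_2^{(k+1)}\circ_{k+2}\mu_N)$ of \eqref{tower} for $k+1$, and equating the two reductions delivers that very relation --- the exponent $N(i-1)$ in its sign being exactly the Koszul sign incurred in \eqref{graded-assoc} when the bottom gadget $\mu_2(-,\mu_N(-,\dots,-))$ is transported across the $i-1$ preceding inputs of the root. One then has to verify that all remaining overlaps reduce to zero: the other overlaps of the family \eqref{tower} with \eqref{mu22}; the overlaps of \eqref{tower} with \eqref{mu2N} and with \eqref{muNN} at the root $\mu_N$ or at the interior $\mu_N$ of its leading term; and the self- and cross-overlaps of the family \eqref{tower} (two $\mu_2$-$\mu_N$ combs meeting at a shared $\mu_N$-vertex). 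In each case the reduction uses, again and again, that any monomial containing a $\mu_N$-over-$\mu_N$ pattern or a shorter instance of the leading term of \eqref{tower} reduces to zero, together with associativity; confluence then works out.

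It remains to check that the system is reduced, i.e.\ that none of $\mu_2\circ_1\mu_2$, $\mu_2\circ_1\mu_N$, $\mu_N\circ_i\mu_N$ and $\mu_N\circ_i(\mu_2^{(k)}\circ_{k+1}\mu_N)$ divides another. This is immediate from the shapes: in the comb leading terms every $\mu_2$-vertex has a leaf as its first input and no $\mu_N$ among its inputs, no $\mu_N$-vertex has a $\mu_N$ among its inputs, and the combs have incomparable lengths and attachment data. By uniqueness of the reduced Gr\"obner basis, this finishes the proof. I expect the main obstacle to be twofold: carrying the Koszul signs from \eqref{graded-assoc} correctly throughout (note that $\mu_N$ is an odd operation precisely when $N$ is odd, which is exactly when the sign $(-1)^{N(i-1)}$ is non-trivial), and organizing the confluence verification so that it is genuinely finite --- \emph{a priori} the Buchberger procedure could keep producing unrelated relations, so one must argue that every overlap involving the infinite family \eqref{tower} collapses back into that family or to zero, ideally via a uniform combinatorial description of the tree monomials avoiding the listed leading terms.
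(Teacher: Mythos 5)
Your proposal follows essentially the same route as the paper: both run the operadic Diamond Lemma/Buchberger procedure with the path-lexicographic order, identify the same leading terms, locate the birth of the relations \eqref{tower} with $k=1$ in the overlaps of $\mu_2\circ_1\mu_N$ with $\mu_N\circ_i\mu_N$, obtain the relations for $k+1$ from the overlap of the leading term of \eqref{tower} with the associativity leading term $\mu_2\circ_1\mu_2$, and dispose of the remaining overlaps by reduction to zero. The argument is correct and at a comparable level of detail to the paper's (which likewise treats only the low-arity computations explicitly and asserts the higher steps are analogous); your added check of reducedness is a small bonus.
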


\begin{proof}
By Theorem~\ref{DiamondLemma}, it is enough to prove that all S-polynomials coming from small common multiples of leading terms of the given relations can be reduced to zero using the same system of relations. We shall explain how the proof goes, and demonstrate it in the case of low arities, since in further arities all the computations one has to performed are modelled on those for low arities.

The S-polynomial that arises from the small common multiple the leading term $\mu_2\circ_1\mu_2$ of the relation $\mu_2\star\mu_2=0$ has with itself can be reduced to zero using just that relation; this can be interpreted in many different ways including the setup of the MacLane coherence axiom for monoidal categories; see \cite[Section~$8.1$]{LodayVallette10} for details. The S-polynomial that arises from the small common multiple of the leading terms $\mu_2\circ_1\mu_2$ and $\mu_2\circ_1\mu_N$ of the relations $\mu_2\star\mu_2=0$ and $\mu_2\star\mu_N+\mu_N\star\mu_2=0$ respectively can as well be reduced to zero using only those operations and nothing else; this is a result of a more tedious but straightforward computation. However, the leading term $\mu_2\circ_1\mu_N$ of the relation $\mu_2\star\mu_N+\mu_N\star\mu_2=0$ forms $n$ small common multiples with the relations $\mu_N\circ_i\mu_N$, and the corresponding S-polynomials cannot be reduced using only the defining relations of $\NA_{2,N}$; that is 
where new elements of the reduced Gr\"obner basis start showing up. We shall exhibit here the first steps of the respective computations, the further steps being completely analogous. In what follows, we use the notation $R_{i,k}$ ($k\ge1$, $i=1,\ldots,N-1$) for the relation given by Formula~\eqref{tower}. 

For the small common multiple $\mu_2\circ_1\mu_N\circ_1\mu_N$ the corresponding S-polynomial can be, using the relations $\mu_N\circ_i\mu_N=0$, be reduced to 
\begin{multline*}
(-1)^{N}(\mu_2\circ_2\mu_N)\circ_1\mu_N-\mu_N\circ_1\mu_2\circ_1\mu_N=
(-1)^{N+N^2}(\mu_2\circ_1\mu_N)\circ_{N+1}\mu_N-\\-\mu_N\circ_1\mu_2\circ_1\mu_N\rightarrow
(-1)^{N}(\mu_N\circ_N\mu_2)\circ_{N+1}\mu_N-(-1)^{N}(\mu_N\circ_1\mu_2)\circ_2\mu_N,                                                             
\end{multline*}
where the first equality comes from the graded associativity of the operad composition, and the second one uses reductions modulo the defining relations~\eqref{mu2N} and~\eqref{muNN}. For $1<i\le N$, the small common multiple $\mu_2\circ_1\mu_N\circ_i\mu_N$ gives rise to the S-polynomial
\begin{equation}\label{Spolyformovingcorollas}
\mu_N\circ_{i-1}(\mu_2\circ_2\mu_N)-\mu_N\circ_i(\mu_2\circ_1\mu_N), 
\end{equation}
which can be reduced modulo the defining relations~\eqref{mu2N} and~\eqref{muNN} to
\begin{equation}\label{move-temp}
\mu_N\circ_{i-1}(\mu_2\circ_2\mu_N)-(-1)^N\mu_N\circ_i(\mu_2\circ_2\mu_N). 
\end{equation}
Altogether these elements can be reduced to zero using the relations 
 $$
\mu_N\circ_i(\mu_2\circ_2\mu_N)=(-1)^{N(i-1)}\mu_N\circ_N(\mu_2\circ_2\mu_N), 
 $$
which are precisely the relations $R_{i,1}$ of what we want to prove to be the reduced Gr\"obner basis. The S-polynomials corresponding to the small common multiples of the monomial relations $\mu_N\circ_i\mu_N=0$ and $\mu_N\circ_j\mu_N=0$ are trivially zero, so there is nothing to check.

All the S-polynomials corresponding to the small common multiples of the leading terms the defining relations have now been treated. Let us now study the small common multiples of the leading terms of the defining relations with the leading terms of relations~\eqref{tower}.

All the small common multiples of the leading terms of the relations $R_{i,1}$ with the monomial relations $\mu_N\circ_j\mu_N$ can be easily reduced to zero without using any new relations, as well as the small common multiple $(\mu_N\circ_i(\mu_2\circ_2\mu_N))\circ_i\mu_N$ of the leading term of the relation $R_{i,1}$ with the leading term of $\mu_2\star\mu_N+\mu_N\star\mu_2=0$.

It is also easy to see that there are no new relations needed to reduce all the S-polynomials arising from the small common multiples of the leading terms of relations $R_{i,1}$ and $R_{j,1}$. Indeed, there are two combinatorially different kinds of small common multiples of that sort. The small common multiple 
 $$
(\mu_N\circ_i(\mu_2\circ_2\mu_N))\circ_{j+N}(\mu_2\circ_2\mu_N)=(-1)^N(\mu_N\circ_j(\mu_2\circ_2\mu_N))\circ_{i}(\mu_2\circ_2\mu_N) 
 $$
for $1\le i<j\le N-1$ leads to the S-polynomial 
 $$
(-1)^{N(i-1)}(\mu_N\circ_N(\mu_2\circ_2\mu_N))\circ_{j+N}(\mu_2\circ_2\mu_N)-(-1)^{N+N(j-1)}(\mu_N\circ_N(\mu_2\circ_2\mu_N))\circ_i(\mu_2\circ_2\mu_N)
 $$
which can be reduced to zero modulo the relations $R_{k,1}$ for various~$k$ (taking into the account the graded associativity of the operad composition). The small common multiple
 $$
(\mu_N\circ_i(\mu_2\circ_2\mu_N))\circ_{i+j}(\mu_2\circ_2\mu_N) 
 $$
for $1\le i\le j\le N-1$ leads to the S-polynomial 
 $$
(-1)^{N(i-1)}(\mu_N\circ_N(\mu_2\circ_2\mu_N))\circ_{i+j}(\mu_2\circ_2\mu_N)-(-1)^{N(j-1)}\mu_N\circ_i(\mu_2\circ_2(\mu_N\circ_N(\mu_2\circ_2\mu_N))) 
 $$
where the second term is immediately reduced to 
\begin{multline*}
(-1)^{N(j-1)+N(i-1)+N(i-1)}\mu_N\circ_N(\mu_2\circ_2(\mu_N\circ_N(\mu_2\circ_2\mu_N)))=\\=
(-1)^{N(j-1)}\mu_N\circ_N(\mu_2\circ_2(\mu_N\circ_N(\mu_2\circ_2\mu_N)))
\end{multline*}
using the relation $R_{i,1}$ twice, while the first term is reduced to the same result through a lengthier sequence of reductions (depending on $i+j$ being less than, equal to or greater to $N$). 

The small common multiple $(\mu_N\circ_i(\mu_2\circ_2\mu_N))\circ_i\mu_2$ of the leading term of the relation $R_{i,1}$ with the leading term of $\mu_2\star\mu_2=0$ creates an S-polynomial that can be reduced to $R_{i,2}$, and hence can be reduced to zero using all the elements we have. 

The last computation at this stage is that for the S-polynomial coming from a yet another small common multiple of the leading terms of the relations $R_{i,1}$ and $\mu_2\star\mu_N+\mu_N\star\mu_2=0$, that is $\mu_2\circ_1\mu_N\circ_i(\mu_2\circ_2\mu_N)$. However, it is true that this S-polynomial can be reduced completely using the defining relations of the operad together with the relations $R_{i_1}$ and $R_{i,2}$. 

The way the elements $R_{i,3}$ etc. arise is exactly similar, and our system of elements can be shown to be sufficient to reduce to zero all arising S-polynomials.
\end{proof}

\begin{corollary}\label{basis-B}
A basis for the operad $\NA_{2,N}$ can be defined inductively as follows. The identity map $\id\in\NA_{2,N}(1)$ is a basis element, for every basis element $b$, $\mu_2(\id,b)$ is a basis element, and also for each choice of nonnegative integers $i_1,\ldots,i_{N-1}$, $\mu_N(\mu_2^{(i_1)},\mu_2^{(i_2)},\ldots,\mu_2^{(i_{N-1})},\mu_2(\id,b))$ is a basis element. 
\end{corollary}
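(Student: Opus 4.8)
The plan is to read the statement off Theorem~\ref{thm:GBB2N} together with the proposition that the tree monomials which are normal with respect to a Gr\"obner basis form a basis of the corresponding quotient operad. It therefore suffices to prove that the tree monomials of $\NA_{2,N}$ that are normal with respect to the reduced Gr\"obner basis of Theorem~\ref{thm:GBB2N} are exactly the trees produced by the inductive recipe in the statement. I would do this by first rewriting ``normal'' as a short list of combinatorial conditions on planar trees, and then verifying, by induction on the number of internal vertices, that the trees satisfying those conditions are precisely the ones listed.

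The leading terms needed for this were already identified in the proof of Theorem~\ref{thm:GBB2N}: they are $\mu_2\circ_1\mu_2$, $\mu_2\circ_1\mu_N$, the monomials $\mu_N\circ_i\mu_N$ for $1\le i\le N$, and, for each relation~\eqref{tower}, its left-hand side $\mu_N\circ_i(\mu_2^{(k)}\circ_{k+1}\mu_N)$. Unwinding non-divisibility by these monomials, a planar tree monomial $T$ is normal if and only if: (i) the first input of every $\mu_2$-vertex of $T$ is a leaf; (ii) no $\mu_N$-vertex of $T$ has a $\mu_N$-vertex among its children; and (iii) for every $\mu_N$-vertex $v$ of $T$ and every $i\in\{1,\dots,N-1\}$, the ``right spine'' obtained by starting at the $i$-th input of $v$ and repeatedly passing to the second input of the current $\mu_2$-vertex never reaches a $\mu_N$-vertex. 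Conditions (i) and (ii) are the verbatim content of non-divisibility by the first three families of leading terms; condition (iii) is the verbatim content of non-divisibility by the monomials $\mu_N\circ_i(\mu_2^{(k)}\circ_{k+1}\mu_N)$, since one of the latter divides $T$ exactly when that right spine contains at least $k$ consecutive $\mu_2$-vertices followed by a $\mu_N$-vertex.

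With this in hand I would induct on the number of internal vertices of a normal tree $T$. If $T$ has none, $T=\id$. If the root of $T$ is a $\mu_2$-vertex, then (i) forces $T=\mu_2(\id,b)$ with $b$ normal and smaller, hence a basis element by induction. If the root of $T$ is a $\mu_N$-vertex, then by (i)--(iii) each of its first $N-1$ inputs carries a right comb of $\mu_2$-vertices with leaf first inputs and a leaf at the bottom, that is, some $\mu_2^{(i_j)}$ (the case $i_j=0$ giving $\id$), while its $N$-th input, on which no condition of type~(iii) is imposed, may carry any normal tree that is not rooted at a $\mu_N$-vertex, namely $\id$ or $\mu_2(\id,b)$ with $b$ a basis element; this produces exactly the trees $\mu_N(\mu_2^{(i_1)},\dots,\mu_2^{(i_{N-1})},\mu_2(\id,b))$ in the statement (together with the degenerate tree $\mu_N$ itself, corresponding to the last input being a bare leaf). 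The main obstacle is precisely this last point, i.e.\ the asymmetric role of the last input: it is the fact that the index $i$ in~\eqref{tower} runs only up to $N-1$ that lets an arbitrary basis element be reattached at the final input of a $\mu_N$-vertex, and this is what closes the recursion; by contrast, turning non-divisibility by the $R_{i,k}$ into condition~(iii) and carrying out the induction in the remaining cases is straightforward bookkeeping, for which one uses admissibility of the path-lexicographic order exactly as in Theorem~\ref{thm:GBB2N}.
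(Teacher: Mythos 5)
Your argument is correct and is essentially the paper's own proof with the details filled in: the paper simply asserts that the listed elements are the tree monomials normal with respect to the Gr\"obner basis of Theorem~\ref{thm:GBB2N}, and your conditions (i)--(iii) together with the induction on the root vertex are precisely the bookkeeping needed to justify that assertion, including the correct observation that the last input of a $\mu_N$-vertex plays a special role because $i$ runs only up to $N-1$ in~\eqref{tower}. One small correction to your parenthetical: the normal monomials not literally produced by the corollary's recipe are not just $\mu_N$ itself but all trees in which some $\mu_N$-vertex carries a bare leaf in its last slot (e.g.\ $\mu_N(\mu_2^{(i_1)},\ldots,\mu_2^{(i_{N-1})},\id)$ with the $i_j$ not all zero, or $\mu_2(\id,\mu_N)$); this is an imprecision in the statement rather than in your proof, the intended reading being that the last argument of $\mu_N$ may be any basis element of the form $\id$ or $\mu_2(\id,b)$, consistently with the basis of the algebra $D$ exhibited in the proof of Theorem~\ref{thm:presentationKDinfty}.
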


\begin{proof}
Indeed, these are precisely the normal tree monomials with respect to the leading terms of the elements of the Gr\"obner basis we computed. 
\end{proof}

\section{Generators and relations for the Yoneda $\A_\infty$-algebras of $N$-Koszul algebras}\label{sec:present}

In this Section, we use the aforementioned operadic notions to provide a natural framework to describe the Koszul dual algebras of $N$-Koszul algebras functorially via generators and relations.

\begin{theorem}\label{thm:presentationKDinfty}
Let $A=T(V)/(R)$ be a finitely generated $N$-homogeneous algebra.
Then
\begin{equation}\label{genrel}
A^!\cong\NA_{2,N}(V^*)/(\mu_2(V^*,V^*),\mu_N(R^\bot)) 
\end{equation}
as  $\NA_{2,N}$-algebras, where $\mu_N(R^\bot)$ is viewed as a subspace of $\mu_N(V^*,V^*,\ldots,V^*)$.
\end{theorem}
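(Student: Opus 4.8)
The plan is to use Gröbner bases for algebras over the operad $\NA_{2,N}$ (Theorem~\ref{basis-extension}) to compute both sides of \eqref{genrel} and match them degree by degree against the description of $A^!$ supplied by Theorem~\ref{computeproducts}. Write $B:=\NA_{2,N}(V^*)/(\mu_2(V^*,V^*),\mu_N(R^\bot))$ for the right-hand side. First I would exhibit the map: the inclusion $V^*\hookrightarrow A^!_1$ extends by freeness to a morphism of $\NA_{2,N}$-algebras $\NA_{2,N}(V^*)\to A^!$, and one checks that the two families of relations are killed — $\mu_2(V^*,V^*)=0$ in $A^!$ because $A^!_2=0$ for $N>2$, and $\mu_N(R^\bot)=0$ because under the pairing $\mu_N$ on $A^\vee$ restricted to $(V^*)^{\otimes N}$ lands in $A^\vee_N$ and $R^\bot$ is precisely the kernel of the composite $(V^*)^{\otimes N}\twoheadrightarrow A^\vee_N$. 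Hence the morphism descends to a map $\varphi\colon B\to A^!$, and it is surjective because $A^!$ is generated in weight~$1$ by $V^*$ (this is where finite-generation, i.e.\ finite-dimensionality of $V$, and the $N$-homogeneity of the presentation of $A^\vee$ enter, via the standard fact that $A^\vee=T(V^*)/(R^\bot)$ is generated by $V^*$ with relations concentrated in weight $N$). It then suffices to show $\varphi$ is injective, for which I would compare dimensions of bigraded components.

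The next step is to compute a Gröbner basis for the extension of constants $\NA_{2,N}\ltimes B$. Starting from the reduced Gröbner basis of $\NA_{2,N}$ itself (Theorem~\ref{thm:GBB2N}) and the arity-zero relations coming from the two defining families $\mu_2(v^*,w^*)=0$ and $\mu_N(r^\bot)=0$ (after choosing a monomial order on $V^*$ and on $R^\bot$ refining the operadic path-lexicographic order), I would run Buchberger's algorithm in the sense of Theorem~\ref{DiamondLemma}: the new S-polynomials to check are those formed between the operadic Gröbner basis elements \eqref{tower} and the arity-zero relations, and those among the arity-zero relations themselves. The monomial relation $\mu_N(r^\bot)=0$ combined with $R_{i,k}$ should close up without new elements, essentially because the tower relations \eqref{tower} only ever reposition $\mu_N$-corollas and the leading term of $\mu_N(R^\bot)$ is already a $\mu_N$-monomial in the generators; the relation $\mu_2(v^*,w^*)=0$ will force the computation of a reduced Gröbner basis of $A^\vee=T(V^*)/(R^\bot)$ — or rather of its relevant piece — hidden inside the arity-zero part. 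Reading off the normal arity-zero tree monomials not lying in $\NA_{2,N}$ and using Corollary~\ref{basis-B} as a template, the normal monomials for $B$ will be of the shape (generator of $V^*$) or (a $\mu_N$ applied to normal monomials of~$A^\vee$), organised so that weight-$m$ components are nonzero only for $m\equiv 0,1 \pmod N$, with $B_{kN+1}$ and $B_{kN}$ matching the weight-$(kN+1)$ and weight-$kN$ parts of $A^\vee$ respectively.

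The main obstacle I expect is precisely this last matching: showing that the normal-monomial basis of $B$ produced by the Gröbner computation is in bijection, weight by weight, with the basis of $A^!$, i.e.\ with $A^\vee_{kN}$ and $A^\vee_{kN+1}$. This requires that the Gröbner basis of $\NA_{2,N}\ltimes B$, restricted to arity zero, "sees" exactly the relations defining $A^\vee$ in the weights $kN$ and $kN+1$ and nothing spurious in the intermediate weights — in other words that the combinatorics of the operadic relations \eqref{mu22}, \eqref{mu2N}, \eqref{muNN}, \eqref{tower} interacts cleanly with the relations $R^\bot$ of the homogeneous dual, so that no cancellation or extra relation appears. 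Concretely one must verify that a $\mu_N$ can be "filled" by arbitrary elements of $A^\vee$ in its legs (subject to the weight bookkeeping) and that iterating $\mu_2(\id,-)$ faithfully reproduces the higher-weight structure, which is exactly what Theorem~\ref{computeproducts} asserts on the $\Ext$ side; so the proof amounts to checking that our universal, presentation-level construction has the same Hilbert series as He--Lu's computed answer. Once the dimensions agree in every bidegree, surjectivity of $\varphi$ upgrades to an isomorphism, completing the proof.
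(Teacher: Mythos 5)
Your overall strategy (build the surjection $\varphi\colon B\to A^!$ from freeness, then match dimensions via normal monomials for the extension of constants) is the right one, and the construction of $\varphi$ is correct. But there is a genuine gap at the step you yourself flag as ``the main obstacle'': you propose to run Buchberger's algorithm on $\NA_{2,N}\ltimes B$ for a \emph{general} $R$, which would require a Gr\"obner basis of the ideal $(R^\bot)\subset T(V^*)$ sitting inside the arity-zero part. No such basis is available for an arbitrary $N$-homogeneous $R^\bot$ (it is typically infinite and depends wildly on $R$), so the ``reading off the normal arity-zero tree monomials'' step cannot be carried out uniformly. Your fallback --- upgrading surjectivity to an isomorphism by comparing Hilbert series --- is circular: the Hilbert series of $B$ is exactly the unknown that the Gr\"obner computation was supposed to produce. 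The paper's proof avoids this by a two-step reduction you are missing. First it treats only the extremal case $R=V^{\otimes N}$, where the arity-zero relations $\mu_2(e_i,e_j)=0$ are \emph{monomial}, the Buchberger procedure closes after one new element $\mu_2(e_i,\mu_2(e_j,\id))=0$, and the normal monomials are the alternating towers, giving $D:=\NA_{2,N}(V^*)/(\mu_2(V^*,V^*))\cong V^*\oplus (V^*)^{\otimes N}\oplus (V^*)^{\otimes(N+1)}\oplus\cdots$. Then, for general $R$, it presents $B$ as the quotient of $D$ by the $\NA_{2,N}$-ideal generated by $\mu_N(R^\bot)$ and uses the relations \eqref{mu2N}, \eqref{tower}, \eqref{Spolyformovingcorollas}, \eqref{move-temp} --- which let the single $\mu_N$-corolla carrying $R^\bot$ slide freely between the slots of each tower --- to identify that ideal, weight by weight, with the components of the two-sided ideal $(R^\bot)\subset T(V^*)$ in weights $\equiv 0,1\pmod N$. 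This identification is what replaces any Gr\"obner computation depending on $R$, and it is the missing idea in your plan.

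A secondary point: you take Theorem~\ref{computeproducts} (He--Lu) as the description of the target $A^!$, but that theorem concerns the Yoneda $\A_\infty$-algebra of an $N$-\emph{Koszul} algebra, whereas Theorem~\ref{thm:presentationKDinfty} is asserted for an arbitrary finitely generated $N$-homogeneous algebra. The correct comparison target is the $\NA_{2,N}$-structure placed on $A^!$ directly from the product of $A^\vee$ by Corollary~\ref{A2N-constr}; the identification with $\Ext^\bullet_A(\KK,\KK)$ is a separate statement valid only under the Koszul hypothesis.
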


\begin{proof} Let us first prove a particular case of this result, and then use it in the general case.
\begin{lemma}
This theorem holds for $R=V^{\otimes N}$. 
\end{lemma}
\begin{proof}
Let us remark that, in this case, the algebra on the right-hand side of~\eqref{genrel} is the algebra $D:=\mathop{\NA_{2,N}}(V^*)/(\mu_2(V^*,V^*))$ of which all the algebras of the right-hand sides of~\eqref{genrel} for various $R$ are quotients. Applying the methods we explained in the end of Section~\ref{sec:Grobner}, we shall study the corresponding extension $\NA_{2,N}\ltimes D$. If we fix some basis $e_1,\ldots,e_k$ of $V^*$, the relations needed to present the extension are $\mu_2(e_i,e_j)=0$. The small common multiple of the associativity relation $\mu_2\star\mu_2=0$ and the relation $\mu_2(e_i,e_j)=0$ produces the relation $\mu_2(e_i,\mu_2(e_j,\id))=0$. This extra relation has no small common multiples with the other relations.

Applying Theorem~\ref{basis-extension}, in the view of Theorem~\ref{thm:GBB2N} and Corollary~\ref{basis-B}, we immediately conclude that a basis for the algebra $D$ can be defined inductively as follows. It has even and odd elements, all generators $e_1$, \ldots, $e_k$ are odd basis elements, for every even basis element $b$, and each $1\le j\le k$, the element $\mu_2(e_j,b)$ is an odd basis element, and also for each odd basis element $b$, and for each $1\le i_1,\ldots,i_{N-1}\le k$, the element $\mu_N(e_{i_1}, e_{i_2},\ldots, e_{i_{N-1}},b)$ is an even basis element. In other words, our algebra has nonzero elements only of weight divisible by~$N$ or congruent to~$1$ modulo~$N$, and for each such weight there exists exactly one type of basis element of that weight. Combinatorially, the corresponding trees are alternating ``towers'' of operations with all the compositions using the last slot of an operation only. This gives a vector space identification of 
$D\cong V^*\oplus \mu_N(V^*, \ldots, V^*)\oplus (\mu_2\circ_2 \mu_N)(V^*, \ldots, V^*)\oplus \cdots$ with the Yoneda $\A_\infty$-algebra 
$A^!= V^*\oplus {V^*}^{\otimes N}\oplus {V^*}^{\otimes (N+1)}\oplus \cdots$ of the algebra $A=T(V)/(V^{\otimes N})$. Comparing the operations of $D$ with those given by Corollary~\ref{A2N-constr}, we see that the corresponding $\NA_{2,N}$-algebras are isomorphic.
\end{proof}

Let us prove the theorem for a general set of relations~$R$. In the proof above for the case of the algebra $D$, corresponding to $R=V^{\otimes N}$, we obtained a basis where we alternate the operations $\mu_2$ and $\mu_N$, computing all compositions at the last slot, and then substitute into the resulting operation an arbitrary word in $e_1$, \ldots, $e_k$. Now, the defining relation $\mu_2\star\mu_N+\mu_N\star\mu_2=0$, together with the vanishing of all the elements~\eqref{tower}, \eqref{Spolyformovingcorollas}, and \eqref{move-temp} mean that in our alternating towers, the only operation we plug in at each level can be freely moved between the slots. If we now impose the additional relations $\mu_N(R^\bot)=0$ and use the identification
$D\cong V^*\oplus {V^*}^{\otimes N}\oplus {V^*}^{\otimes (N+1)}\oplus \cdots$ discussed above, it becomes clear that the underlying vector space of the quotient that we are studying is the direct sum of the appropriate homogeneous components of the algebra~$A^\vee=T(V^*)/(R^\bot)$. The isomorphism of $\NA_{2,N}$-algebras follows, yet again, from Corollary~\ref{A2N-constr}.
\end{proof}

\begin{corollary}
An $N$-homogeneous algebra $A=T(V)/(R)$ is $N$-Koszul if and only if the Yoneda $\A_\infty$-structure on its $\Ext$-algebra factors through the operad $\NA_{2,N}$, and 
 $$
\Ext^\bullet_A(\KK,\KK)\cong\NA_{2,N}(V^*)/(\mu_2(V^*,V^*),\mu_N(R^\bot)) 
 $$
as  $\NA_{2,N}$-algebras.
\end{corollary}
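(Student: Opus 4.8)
The plan is to deduce this corollary from Theorem~\ref{thm:reduced}, Theorem~\ref{computeproducts}, Theorem~\ref{thm:presentationKDinfty}, and the corollaries of Section~\ref{sec:operadB}; the statement is essentially a repackaging of those results, so the actual work lies in checking that their hypotheses line up.

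For the ``only if'' direction, I would start from the assumption that $A$ is $N$-Koszul. By Theorem~\ref{thm:reduced} its Yoneda $\A_\infty$-algebra is already an $\A_{2,N}$-algebra (all operations $\mu_n$ with $n\ne 2,N$ vanish), and by the corollary of Section~\ref{sec:operadB} asserting that the operations on the Yoneda $\A_\infty$-algebra of an $N$-Koszul algebra satisfy the relations of $\NA_{2,N}$, the structure map $\A_\infty\to\End_{\Ext^\bullet_A(\KK,\KK)}$ factors through the quotient operad $\NA_{2,N}$. Next, by Theorem~\ref{computeproducts} together with the proof of that same corollary, the Yoneda $\A_\infty$-algebra is, as an $\NA_{2,N}$-algebra, precisely the algebra $E$ produced by Corollary~\ref{A2N-constr} from $A^\vee$, whose underlying graded vector space is $A^!$. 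Finally, Theorem~\ref{thm:presentationKDinfty} identifies $A^!$ with $\NA_{2,N}(V^*)/(\mu_2(V^*,V^*),\mu_N(R^\bot))$ as $\NA_{2,N}$-algebras, which yields the desired presentation.

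For the ``if'' direction, I would assume that the Yoneda $\A_\infty$-structure factors through $\NA_{2,N}$ and that $\Ext^\bullet_A(\KK,\KK)\cong\NA_{2,N}(V^*)/(\mu_2(V^*,V^*),\mu_N(R^\bot))$ as $\NA_{2,N}$-algebras. Factoring through $\NA_{2,N}$ in particular forces all higher operations except $\mu_2$ and $\mu_N$ to vanish, so the Yoneda $\A_\infty$-algebra is an $\A_{2,N}$-algebra; since morphisms in $\Vect$ preserve the homological grading, the given isomorphism is one of graded $\A_{2,N}$-algebras. By Theorem~\ref{thm:presentationKDinfty} the right-hand side is isomorphic, as an $\NA_{2,N}$-algebra, to $A^!$ with the structure of Corollary~\ref{A2N-constr} applied to $A^\vee$, which is a \emph{reduced} $\A_{2,N}$-algebra and is visibly generated in degree~$1$. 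Transporting these two properties back along the isomorphisms, $\Ext^\bullet_A(\KK,\KK)$ is a reduced $\A_{2,N}$-algebra generated in degree~$1$, hence $A$ is $N$-Koszul by Theorem~\ref{thm:reduced}.

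The one place where the statement has genuine content beyond bookkeeping is that ``factoring through $\NA_{2,N}$'' alone does \emph{not} force the Yoneda $\A_\infty$-algebra to be reduced --- the example $\KK\langle x,y\mid x^2=xy=y^N=0\rangle$ from the remark above shows this --- so it is essential to use the isomorphism with the specific algebra $\NA_{2,N}(V^*)/(\mu_2(V^*,V^*),\mu_N(R^\bot))$ and the identification of that algebra with $A^!$ from Theorem~\ref{thm:presentationKDinfty} to pin down the reduced structure before invoking He--Lu's criterion. A secondary point is to keep the grading conventions consistent: the homological degrees $\tfrac{2m}{N}$ and $\tfrac{2(m-1)}{N}+1$ assigned to the weight components of $A^\vee$ in the definition of $A^!$ are exactly the ones under which $E$ is a reduced $\A_{2,N}$-algebra in Corollary~\ref{A2N-constr}, and one should double-check that these match the homological degrees carried by $\Ext^\bullet_A(\KK,\KK)$ in Theorem~\ref{computeproducts}.
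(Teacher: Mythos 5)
Your proposal is correct and follows essentially the same route as the paper, which disposes of this corollary in one line as a direct consequence of Theorem~\ref{computeproducts} and Theorem~\ref{thm:presentationKDinfty}. Your expansion is a faithful unpacking of that line; in particular you rightly make explicit that the ``if'' direction rests on He--Lu's criterion (Theorem~\ref{thm:reduced}) and on transporting the reduced structure of Corollary~\ref{A2N-constr} along a degree-preserving isomorphism, ingredients the paper leaves implicit.
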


\begin{proof}
This is a direct corollary of Theorem~\ref{computeproducts} and Theorem~\ref{thm:presentationKDinfty}.
\end{proof}

\section{Higher Koszul duality theory}\label{sec:higherKoszul}

This section provides a conceptual framework for the main constructions and statements of the $N$-Koszul duality theory. It proves the equivalence between finding a resolution of the trivial $A$-module and finding a quasi-free algebra resolution of $A$. 

\subsection{Koszul morphism}
Let $(C, \delta_2, \delta_N)$ be an $\A_{2,N}$-coalgebra and let $(A, \mu)$ be an algebra. The following constructions are particular cases of Chapter~$3$ of the Ph.D. thesis of A. Prout\'e \cite{Proute86}.

\begin{definition}
The \emph{convolution $\A_{2,N}$-algebra} is defined by the  space $\Hom(C, A)$ of linear maps from $C$ to $A$ endowed with the following operations 
$$\begin{array}{ccl}
\star_2(f,g)&:=&  C \xrightarrow{\delta_2} C\otimes C \xrightarrow{f \otimes g} A\otimes A \xrightarrow{\mu} A \ , \\
 \star_N(f_1, \ldots, f_N)&:=&  C \xrightarrow{\delta_N} C^{\otimes N} \xrightarrow{f_1 \otimes \cdots \otimes f_N} A^{\otimes N} \xrightarrow{\mu^{(N-1)}} A \ , 
\end{array}$$
where ${\mu}^{(N-1)}$ stands for any $N-1$ iterations of the associative product $\mu\colon A\otimes A\to A$.
\end{definition}

\begin{definition}
Inside the convolution $\A_{2,N}$-algebra, we consider the \emph{Maurer--Cartan equation}: 
$$\star_2(\alpha, \alpha)+ \star_N(\alpha, \ldots  , \alpha)=0 \ .$$
Solutions of degree $-1$ to the Maurer--Cartan equation are called \emph{twisting morphisms}. The associated set is denoted by $\Tw(C,A)$.
\end{definition}

Let $\alpha : C \to A$ be a degree $-1$ linear map. We consider the degree $-1$ derivation $d_\alpha:=d_2+d_N$ on the free right $A$-module $C\otimes A$, where  
$$\begin{array}{ccl}
d_2&:=&  C\otimes A \xrightarrow{\delta_2\otimes A} C \otimes C\otimes A \xrightarrow{C \otimes \alpha  \otimes A} C\otimes A  \otimes A \xrightarrow{C\otimes\mu} C\otimes A \ , \\
d_N&:=&   \ C\otimes A \xrightarrow{\delta_N\otimes A}   C^{\otimes N}\otimes A \xrightarrow{ C\otimes \alpha^{\otimes (N-1)} \otimes A} C\otimes A^{\otimes N} \xrightarrow{\mu^{N-1}\otimes C} C\otimes A \ . 
\end{array}$$

\begin{lemma}
If $\alpha \in\Tw(C,A)$, then ${d_\alpha}^2=0$.
\end{lemma}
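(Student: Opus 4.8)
The plan is to compute $d_\alpha^2 = (d_2+d_N)^2 = d_2^2 + (d_2 d_N + d_N d_2) + d_N^2$ and match each grouping against the three defining relations of the $\A_{2,N}$-coalgebra, namely $\delta_2\star\delta_2 = 0$, $\delta_2\star\delta_N + \delta_N\star\delta_2 = 0$, and $\delta_N\star\delta_N = 0$. Since $d_\alpha$ is by construction a derivation of the free right $A$-module $C\otimes A$, so is $d_\alpha^2$ (the square of an odd derivation is a derivation), and hence it is determined by its restriction to $C = C\otimes 1$; thus it suffices to check the vanishing on generators, which removes the right $A$-module bookkeeping from the computation.

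First I would expand $d_2^2$ by unfolding the definition: applying $\delta_2\otimes A$ twice and coassociativity-type considerations, one sees that $d_2^2$ factors through $(\delta_2\star\delta_2)\otimes A$ followed by the insertions of $\alpha$ and the multiplications $\mu$; here one must be careful that the two copies of $\alpha$ (each of degree $-1$) and the comultiplications are threaded in the correct order, producing exactly the Koszul signs that turn the two terms $\delta_2\circ_1\delta_2$ and $\delta_2\circ_2\delta_2$ into the pre-Lie product $\delta_2\star\delta_2$. Associativity of $\mu$ is used to collapse the iterated products into a single well-defined map. So $d_2^2 = 0$ follows from $\delta_2\star\delta_2=0$. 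Next I would treat $d_N^2$ in the same spirit: it factors through $\delta_N\star\delta_N$ with the appropriate signs, using associativity of $\mu$ (to make sense of $\mu^{(N-1)}$ independently of the bracketing) and the fact that $\alpha^{\otimes(N-1)}$ contributes a controlled sign when commuted past the comultiplications; hence $d_N^2=0$ from $\delta_N\star\delta_N=0$. Finally, the cross term $d_2 d_N + d_N d_2$ is the one that, after unwinding, equals a map built from $\delta_2\star\delta_N + \delta_N\star\delta_2$, and vanishes by the remaining defining relation; together with $\alpha\in\Tw(C,A)$, i.e. $\star_2(\alpha,\alpha)+\star_N(\alpha,\dots,\alpha)=0$ in the convolution algebra, this is what makes the mixed contribution collapse. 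Adding the three pieces gives $d_\alpha^2 = 0$.

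The main obstacle will be sign bookkeeping: each $\alpha$ has degree $-1$, the operations $\delta_2,\delta_N$ carry degrees $0$ and $N-2$, the derivation sign rule introduces further signs, and one must verify that the combinatorial signs in $d_2 d_N + d_N d_2$ exactly reproduce those in the pre-Lie product defining $\delta_2\star\delta_N+\delta_N\star\delta_2$, rather than some twisted variant. I would organise this by writing each of $d_2, d_N$ as a sum of ``insertion'' operators $(\id^{\otimes(i-1)}\otimes \alpha\otimes\id^{\otimes\cdots})$ precomposed with an appropriate iterate of $\delta_2$ or $\delta_N$ on $C$, and then apply the graded coassociativity relations dual to~\eqref{graded-assoc} to reindex; the degree $-1$ of $\alpha$ is precisely what makes the Maurer--Cartan sign convention $\star_2(\alpha,\alpha)+\star_N(\alpha,\dots,\alpha)$ (with a plus sign) come out right. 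Once the signs are pinned down, the three vanishing statements are immediate consequences of the $\A_{2,N}$-coalgebra axioms, and the lemma follows.
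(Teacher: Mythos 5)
Your overall plan (expand $(d_2+d_N)^2$, identify the resulting composites of coproducts, and invoke the $\A_{2,N}$-coalgebra axioms) sets up the right computation, and the reduction to $C\otimes 1$ by right $A$-linearity is a legitimate simplification. But the way you apportion the vanishing is wrong: the three groupings $d_2^2$, $d_2d_N+d_Nd_2$ and $d_N^2$ do \emph{not} vanish separately, and none of them vanishes by a coalgebra axiom alone. Concretely, $d_2^2(c\otimes a)=\sum\pm\, c_{(1)(1)}\otimes\alpha(c_{(1)(2)})\alpha(c_{(2)})\,a$ involves only the composite $(\delta_2\otimes\id)\circ\delta_2$; coassociativity merely rewrites this as $(\id\otimes\delta_2)\circ\delta_2$, after which the two copies of $\alpha$ sit on adjacent tensor factors and assemble into $\mu(\alpha\otimes\alpha)\delta_2=\star_2(\alpha,\alpha)$. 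Thus $d_2^2$ equals, up to sign, the operator $c\otimes a\mapsto\sum\pm\,c_{(1)}\otimes\star_2(\alpha,\alpha)(c_{(2)})\,a$, which is nonzero for a general degree $-1$ map. (Sanity check: with $\delta_N=0$ your argument would show that $d_2^2=0$ for \emph{every} degree $-1$ map on a coassociative coalgebra, contradicting the classical fact that the twisted differential squares to zero only when $\alpha$ is a twisting morphism.) Likewise $d_N^2$ produces only the first-slot composite $(\delta_N\otimes\id^{\otimes(N-1)})\circ\delta_N$, whereas the axiom $\delta_N\star\delta_N=0$ only kills the \emph{sum} over all $N$ insertion slots; and the cross term produces only $(\delta_N\otimes\id)\circ\delta_2$ and $(\delta_2\otimes\id^{\otimes(N-1)})\circ\delta_N$, i.e.\ two of the $N+2$ terms of $\delta_2\star\delta_N+\delta_N\star\delta_2$.

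The correct bookkeeping is the other way around: use the three coalgebra relations to trade the first-slot composites arising in $d_\alpha^2$ for the remaining-slot composites; in each of those, $\alpha$ is applied to \emph{all} outputs of the inner coproduct, so the terms reassemble, factor by factor, into insertions of the single element $\star_2(\alpha,\alpha)+\star_N(\alpha,\ldots,\alpha)$ into the various slots of $\delta_2$ and $\delta_N$. The whole of $d_\alpha^2$ is therefore a sum of such insertion operators and vanishes precisely because $\alpha$ satisfies the Maurer--Cartan equation. In other words, the Maurer--Cartan equation kills \emph{every} piece of $d_\alpha^2$, not just the mixed one, and the coalgebra axioms serve only as the regrouping tool; your proposal inverts these roles, so as written it does not go through.
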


\begin{proof}
By straightforward computation.
\end{proof}

\begin{definition}
The chain complex 
$$C\otimes_\alpha A :=(C\otimes A, d_\alpha) $$
associated to a twisting morphism $\alpha$ is called a \emph{twisted tensor product}. 
When this chain complex is acyclic, the twisting morphism $\alpha$ is called a \emph{Koszul morphism}. The set of all Koszul morphisms is denoted by $\Kos(C,A)$. 
\end{definition}

\subsection{Cobar constructions}

The data of an $\A_{2,N}$-coalgebra is equivalent to a square-zero derivation $d$ on a free algebra $T(s^{-1}C)$, on the homological desuspension of $C$, such that the components of the restriction $d_{|s^{-1}C}\colon s^{-1}C \to \bigoplus_{n\ge 1} (s^{-1}C)^{\otimes n}$ vanish for $n\ne 2, N$. 

\begin{definition}
We denote this quasi-free dg algebra by 
$\Omega_\infty C :=(T(s^{-1}C), d)$ and call it the \emph{cobar construction of $C$}.
\end{definition}

Because of the universal property of free algebras, defining a morphism of graded algebras from $T(s^{-1}{C})$ to $A$ is equivalent to defining a degree~$-1$ map of graded vector spaces from $C$ to $A$. Inside the space $\Hom_{\mathsf{gr}\ \mathsf{alg}}\left(T(s^{-1}{C}),\, A\right)$ of all morphisms of graded algebras, there is a subspace $\Hom_{\mathsf{dg}\ \mathsf{alg}}\left(\Omega_\infty C,\, A\right)$ consisting of morphisms of dg algebras. It includes an even smaller subspace $\mathop{\mathrm{QI}}_{\mathsf{dg}\ \mathsf{alg}}\left(\Omega_\infty C,\, A\right)$ consisting of quasi-isomorphisms of dg algebras. 

\begin{theorem}\label{thm:FunThmKosMor}Suppose that both $C$ and $A$ are connected with respect to the weight grading. 
Under the identification  $\Hom_{\mathsf{gr}\ \mathsf{alg}}\left(T(s^{-1}{C}),\, A\right)  \cong \Hom({C},\, A)_{-1}$, we have
$$
\begin{array}{c c c  }
 \Hom_{\mathsf{gr}\ \mathsf{alg}}\left(T(s^{-1}{C}),\, A\right)  &  \cong  &  \Hom({C},\, A)_{-1}  \\
&& \\
 \bigcup &&  \bigcup   \\
&&\\
 \Hom_{\mathsf{dg}\ \mathsf{alg}}\left(\Omega_\infty C,\, A\right)  &   \cong   &   \mathrm{Tw}(C,\, A)   \\
&&\\
 \bigcup  &&  \bigcup   \\
&&\\
 {\mathrm{QI}}_{\mathsf{dg}\ \mathsf{alg}}\left(\Omega_\infty C,\, A\right)  &   \cong  &   \mathrm{Kos}(C,\, A) \ . \\
&&
\end{array}
$$
\end{theorem}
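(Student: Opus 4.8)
The plan is to establish the theorem by verifying the two horizontal bijections and then checking that the vertical inclusions on the left correspond to the vertical inclusions on the right. First I would recall that the top bijection $\Hom_{\mathsf{gr}\ \mathsf{alg}}\left(T(s^{-1}C),\, A\right)\cong\Hom(C,\, A)_{-1}$ is simply the universal property of the free associative algebra: a graded algebra morphism out of $T(s^{-1}C)$ is determined by, and can be freely prescribed on, the generating space $s^{-1}C$, and composing with the desuspension isomorphism $s\colon s^{-1}C\xrightarrow{\ \cong\ }C$ shifts the degree by one, so maps $s^{-1}C\to A$ of degree $0$ correspond to maps $C\to A$ of degree $-1$. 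Write $\Phi_\alpha\colon T(s^{-1}C)\to A$ for the algebra morphism associated to $\alpha\in\Hom(C,A)_{-1}$.

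Next I would analyse when $\Phi_\alpha$ commutes with the differentials, i.e. when it lies in $\Hom_{\mathsf{dg}\ \mathsf{alg}}(\Omega_\infty C,\, A)$. Since $A$ carries the zero differential, this means $\Phi_\alpha\circ d=0$, where $d$ is the cobar differential on $T(s^{-1}C)$; because $\Phi_\alpha$ is an algebra morphism and $d$ is a derivation, it suffices to test this on generators, that is, on the restriction $d_{|s^{-1}C}\colon s^{-1}C\to\bigoplus_{n\ge1}(s^{-1}C)^{\otimes n}$. By the description of the cobar construction recalled just above the statement, the only nonzero components of $d_{|s^{-1}C}$ are the ones valued in $(s^{-1}C)^{\otimes 2}$ and $(s^{-1}C)^{\otimes N}$, dual respectively to $\delta_2$ and $\delta_N$ (up to the standard desuspension signs). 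Unwinding $\Phi_\alpha\circ d_{|s^{-1}C}=0$ through the identifications $s^{-1}C^{\otimes n}\cong s^{-1}(C^{\otimes n})$ and the multiplication of $A$, one gets precisely $\star_2(\alpha,\alpha)+\star_N(\alpha,\dots,\alpha)=0$, the Maurer--Cartan equation in the convolution $\A_{2,N}$-algebra $\Hom(C,A)$; thus $\Phi_\alpha$ is a dg algebra morphism if and only if $\alpha\in\Tw(C,A)$. This gives the middle bijection. (The sign bookkeeping in matching the cobar differential conventions with the pre-Lie/Maurer--Cartan conventions of Section~2 is the one genuinely fiddly point, but it is forced and routine.)

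For the bottom line, I would compare the twisted tensor product $C\otimes_\alpha A$ with the standard one-sided ``Koszul complex'' computing $\mathrm{Tor}$ over the cobar construction. The key observation is that, under the identification of $\Phi_\alpha$ with $\alpha$, the complex $(\Omega_\infty C\otimes_{?}A)$ built from $\Phi_\alpha$ — equivalently, the relative bar/cobar two-sided complex, or directly the free $A$-module $C\otimes A$ with the derivation $d_\alpha=d_2+d_N$ appearing in the Lemma just before the definition of twisted tensor product — is acyclic precisely when $\Phi_\alpha$ is a quasi-isomorphism. One way to see this cleanly: both $C$ and $A$ are connected with respect to the weight grading, so a standard filtration/spectral-sequence argument (filtering by weight, which is bounded below and exhaustive on each weight-graded piece by the finite-dimensionality and connectivity hypotheses) shows that the augmentation $\Omega_\infty C\to\KK$ together with $\Phi_\alpha$ yields that $\Phi_\alpha$ is a quasi-isomorphism iff $\KK\otimes_{\Omega_\infty C}^{\mathbb L}A$ is concentrated in degree $0$, iff the twisted tensor product $C\otimes_\alpha A$ is acyclic, iff $\alpha\in\Kos(C,A)$. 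Since the inclusions $\mathrm{QI}\subset\Hom_{\mathsf{dg}\ \mathsf{alg}}\subset\Hom_{\mathsf{gr}\ \mathsf{alg}}$ and $\Kos\subset\Tw\subset\Hom(C,A)_{-1}$ are evidently compatible with the bijections just constructed, the diagram commutes as claimed.

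I expect the main obstacle to be the bottom equivalence — identifying acyclicity of the twisted tensor product with the quasi-isomorphism condition on the cobar morphism. The top and middle bijections are essentially formal (universal property, plus translating ``derivation squares to zero / is compatible with the differential'' into the Maurer--Cartan equation), but the bottom one requires the comparison between $C\otimes_\alpha A$ and a two-sided bar-type resolution, and an honest convergence argument for the weight filtration using the connectivity and finiteness hypotheses; this is where one must be careful, and it is exactly the place where the ``connected with respect to the weight grading'' assumption in the hypothesis gets used.
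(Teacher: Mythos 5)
Your proposal is correct and follows essentially the same route as the paper: the top and middle bijections via the universal property of the free algebra and the translation of compatibility with the cobar differential into the Maurer--Cartan equation, and the bottom equivalence via a weight-filtration comparison argument using connectedness. The only ingredient you leave implicit that the paper makes explicit is the acyclicity of the universal twisted tensor product $C\otimes_\iota\Omega_\infty C$ (cited from Prout\'e), which is what lets the twisted tensor product $C\otimes_\alpha A$ compute the derived tensor product in your formulation.
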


\begin{proof}
We denote the unique morphism of graded algebras associated to a degree $-1$ linear map $\alpha : C \to A$ by 
$g_\alpha : T(s^{-1}{C}) \to A$. It is easy to check that it commutes with the differential of the cobar construction if and only if $\alpha$ is a twisting morphism. 

Let us prove the last equivalence. First, any $\A_{(2, N)}$-coalgebra $(C, \delta_2, \delta_N)$ admits a canonical twisting morphism
$$\iota : C \to s^{-1}C \hookrightarrow \Omega_\infty C=(T(s^{-1}{C}), d)\  .$$ 
It satisfies the following universal property: any twisting morphism $\alpha \in \Tw(C, A)$ factors through it
$$\xymatrix{
 & \Omega_\infty C \ar[dr]^{g_\alpha} &  \\
C \ar[rr]^{\alpha}\ar[ur]^{\iota}  &   &  A  \ .
}$$
Hence, the tensor product of the map $g_\alpha$ with the identity map on $C$ is a map of chain complexes
$$C \otimes_\iota \Omega_\infty C \xrightarrow{g_\alpha \otimes C} C\otimes_\alpha A \  .$$
Under the connectedness assumptions, we can apply the same arguments as the comparison lemma for twisted tensor products \cite[Lemma~$2.5.1$]{LodayVallette10}.  The twisted tensor product $C \otimes_\iota \Omega_\infty C$ is acyclic by \cite[Theorem~$(3.19)$]{Proute86}. Therefore, the map $g_\alpha$ is a quasi-isomorphism if and only if the twisted tensor product $C\otimes_\alpha A$ is acyclic. 
\end{proof}

This theorem demonstrates that the existence of a  quasi-free algebra resolution 
$\Omega_\infty C \xrightarrow{\sim} A$ 
of an  algebra $A$ is equivalent to the existence of 
a  free $A$-module resolution $C\otimes_\alpha A \xrightarrow{\sim} \KK$ of the ground field $\KK$. So, given an algebra $A$, for instance an $N$-homogeneous algebra, one has to look for an $\A_{2,N}$-coalgebra $C$ (or more generally an $\A_\infty$-coalgebra) together with a Koszul morphism $\alpha \, : \, C \to A$ 
to solve these two questions.

The bar construction $\BB A$ provides a functorial dg coalgebra which induces functorial resolutions. 

\begin{proposition} [{\cite{HMS74}}]
For any augmented algebra $A$, the bar-cobar construction $\Omega \BB A \xrightarrow{\sim} A$ is a functorial resolution. The twisted tensor product $\BB A \otimes_\pi A \xrightarrow{\sim}  \KK$, associated to the twisting morphism $\pi \, : \, \BB A =T^c(s\bar A)\twoheadrightarrow s \bar A \to A $, is a resolution of the ground field $\KK$.
\end{proposition}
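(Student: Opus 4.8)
The plan is to prove the two statements of the proposition separately, both by the classical bar--cobar machinery, and then to observe that the second one follows from the same acyclicity input as the first. First I would recall that for any augmented algebra $A$, the bar construction $\BB A = T^c(s\bar A)$ is a conilpotent dg coalgebra whose cobar construction $\Omega\BB A$ is a quasi-free dg algebra. The canonical twisting morphism is $\pi\colon \BB A \twoheadrightarrow s\bar A \xrightarrow{s^{-1}} \bar A \hookrightarrow A$, and one checks directly (this is the standard verification) that $\pi$ satisfies the Maurer--Cartan equation in the convolution algebra $\Hom(\BB A, A)$, so $\pi \in \Tw(\BB A, A)$. By the functorial version of Theorem~\ref{thm:FunThmKosMor} (or rather its classical $\A_\infty$-coalgebra analogue, since $\BB A$ need not be an $\A_{2,N}$-coalgebra but is an honest dg, hence $\A_\infty$, coalgebra), the twisting morphism $\pi$ corresponds to a dg algebra morphism $\Omega\BB A \to A$, which is the counit of the bar--cobar adjunction.

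The heart of the argument is acyclicity. I would invoke the acyclicity of the two-sided bar construction: the twisted tensor product $\BB A \otimes_\pi A$ is precisely the (reduced) complex computing $\Tor^A(\KK,\KK)$ via the bar resolution, and it is a classical fact --- originally \cite{HMS74}, see also \cite[Section~2.3]{LodayVallette10} --- that it is acyclic, i.e.\ has homology $\KK$ concentrated in degree zero. Concretely, one exhibits an explicit contracting homotopy built from the augmentation and the inclusion of $\KK$ into $A$; this is the same extra-degeneracy argument that shows the bar resolution is a resolution. Thus $\pi$ is a Koszul morphism, $\pi \in \Kos(\BB A, A)$, which is exactly the second assertion. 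Then, by the comparison lemma for twisted tensor products \cite[Lemma~2.5.1]{LodayVallette10} applied to the map $\Omega\BB A \otimes A \to A$ covering the identity, together with the acyclicity just established and the acyclicity of $\BB A \otimes_\iota \Omega\BB A$ (the bar side of the same two-sided construction, \cite[Theorem~(3.19)]{Proute86}), one concludes that the counit $\Omega\BB A \to A$ is a quasi-isomorphism. This gives the first assertion: $\Omega\BB A \xrightarrow{\sim} A$ is a functorial resolution by a quasi-free dg algebra, functoriality being automatic since every step --- $\bar{(-)}$, $T^c$, suspension, $T$ --- is functorial in the augmented algebra $A$.

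The step I expect to be the main obstacle is the acyclicity of $\BB A \otimes_\pi A$ done at the level of generality the paper needs, namely without any connectedness or finiteness hypothesis on $A$ (the proposition as stated asks only for an \emph{augmented} algebra, whereas Theorem~\ref{thm:FunThmKosMor} carried a weight-connectedness assumption). The clean way around this is to not route through Theorem~\ref{thm:FunThmKosMor} for this particular statement but instead give the direct simplicial/contracting-homotopy argument: the two-sided bar construction $B(\KK, A, A)$ (which underlies $\BB A \otimes_\pi A$ up to the usual reindexing and signs) carries an extra degeneracy coming from the unit of $A$, hence is contractible, with no hypotheses on $A$ beyond being augmented. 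Once this is in place the rest is bookkeeping: matching the differential of $B(\KK,A,A)$ with $d_\pi$, and quoting \cite[Lemma~2.5.1]{LodayVallette10} and \cite[Theorem~(3.19)]{Proute86} for the passage from module-level acyclicity to the algebra-level quasi-isomorphism. Since the proposition is attributed to \cite{HMS74}, in the write-up I would keep this short and cite rather than reprove, but the proof sketch above is what underlies it.
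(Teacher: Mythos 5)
The paper does not actually prove this proposition: it is stated as a citation to \cite{HMS74}, and the surrounding text only uses it as classical input. So your proposal is not competing with an argument in the paper; it is a reconstruction of the standard proof, and in outline it is the right one. The verification that $\pi$ satisfies the Maurer--Cartan equation, and the acyclicity of $\BB A\otimes_\pi A$ via the extra degeneracy coming from the unit of $A$ (equivalently, the contracting homotopy of the two-sided bar construction $\BB(\KK,A,A)$), are exactly the classical arguments, and you are right that the homotopy argument needs nothing beyond augmentedness, which is the level of generality the proposition claims.

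The one step I would not let stand as written is the deduction of the first assertion from the second via the comparison lemma. The comparison lemma \cite[Lemma~$2.5.1$]{LodayVallette10}, and likewise Theorem~\ref{thm:FunThmKosMor} of this paper, are stated under a weight-grading connectedness hypothesis; a general augmented algebra carries no such grading, so the ``two out of three'' argument applied to $\BB A\otimes_\iota\Omega\BB A\to \BB A\otimes_\pi A$ does not directly apply here. You flag the generality issue yourself for the acyclicity of $\BB A\otimes_\pi A$ and resolve it correctly with the direct homotopy, but then route the quasi-isomorphism $\Omega\BB A\xrightarrow{\sim}A$ back through the comparison lemma. The standard fix is to prove the counit statement independently by filtering $\Omega\BB A$ by the number of bars (syzygy degree) and running the associated spectral sequence, which is bounded below and exhaustive for any augmented algebra; this is the proof in \cite{HMS74} and in \cite[Theorem~$2.3.2$]{LodayVallette10}. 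With that substitution your sketch is complete; functoriality is, as you say, automatic.
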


This general idea to reduce to size of these two resolutions is to consider the underlying homology groups $H_\bullet(\BB A)$ instead of the bar construction $\BB A$ together with the transferred $\A_\infty$-coalgebra structure. This is precisely what the higher Koszul duality does, by proposing a candidate for $H_\bullet(\BB A)$.

\subsection{Higher Koszul duality theory}
Let  $A=T(V)/(R)$ be an $N$-homogeneous algebra. We view $A^{\ac}$ as a $\A_{2,N}$-coalgebra dual to the algebra obtained in Theorem~\ref{thm:presentationKDinfty}.

\begin{definition}
We consider the degree $-1$ linear map 
$\kappa : A^{\ac} \to A$ 
defined by 
$$\kappa : A^{\ac} \twoheadrightarrow A^{\ac}_1 \cong  V  \hookrightarrow A \ .$$ 
\end{definition} 

\begin{lemma}
The map $\kappa \in \Hom(A^{\ac}, A)_{-1}$ is a twisting morphism. 
\end{lemma}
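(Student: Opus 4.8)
The claim is that $\kappa\colon A^{\ac}\to A$, defined as the composite $A^{\ac}\twoheadrightarrow A^{\ac}_1\cong V\hookrightarrow A$, satisfies the Maurer--Cartan equation $\star_2(\kappa,\kappa)+\star_N(\kappa,\ldots,\kappa)=0$ in the convolution $\A_{2,N}$-algebra $\Hom(A^{\ac},A)$. The plan is to evaluate both terms on an arbitrary homogeneous element of $A^{\ac}$ using the explicit description of the $\A_{2,N}$-coalgebra structure on $A^{\ac}$ dual to the presentation of $A^!$ from Theorem~\ref{thm:presentationKDinfty}, together with the weight and degree bookkeeping that the coalgebra carries.

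First I would unwind the degrees. Since $\kappa$ has homological degree $-1$ and is concentrated on $A^{\ac}_1$ (weight~$1$, which in our degree convention sits in homological degree~$1$), the composite $\star_2(\kappa,\kappa)=\mu\circ(\kappa\otimes\kappa)\circ\delta_2$ lands in $A$ starting from the component $A^{\ac}_2$, but $A^{\ac}_2=0$ because $A^!$ is supported only in weights $\equiv 0,1\pmod N$ and $N>2$; hence $\star_2(\kappa,\kappa)$ vanishes identically for weight reasons as soon as one checks that $\delta_2$ applied to anything can only produce a nonzero contribution landing in $(A^{\ac}_1)^{\otimes 2}$ when evaluated on a weight-$2$ element. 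So the only surviving term is $\star_N(\kappa,\ldots,\kappa)=\mu^{(N-1)}\circ(\kappa^{\otimes N})\circ\delta_N$, and the Maurer--Cartan equation reduces to showing $\star_N(\kappa,\ldots,\kappa)=0$. This term is nonzero only on the weight-$N$ component $A^{\ac}_N$, where $\delta_N\colon A^{\ac}_N\to (A^{\ac}_1)^{\otimes N}$ is, by dualising the presentation, precisely the inclusion $A^{\ac}_N\hookrightarrow (A^{\ac}_1)^{\otimes N}$ dual to the surjection $(V^*)^{\otimes N}\twoheadrightarrow A^!_N=(V^{\otimes N}/R)^*$, i.e. $A^{\ac}_N\cong R$ sitting inside $V^{\otimes N}\cong (A^{\ac}_1)^{\otimes N}$. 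Then $(\kappa^{\otimes N})$ followed by $\mu^{(N-1)}$ is exactly the map $V^{\otimes N}\to A$ given by multiplication in $A$, and its restriction to $R\subset V^{\otimes N}$ is zero since $R$ is the space of relations. Hence $\star_N(\kappa,\ldots,\kappa)=0$, as desired.

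The main thing to be careful about — and what I expect to be the only real obstacle — is getting the duality between $\delta_N$ on $A^{\ac}$ and $\mu_N$ on $A^!$ right, including signs coming from the desuspension/suspension conventions and from the symmetry isomorphism in $\Vect$. Concretely, one must verify that the $\A_{2,N}$-coalgebra structure on $A^{\ac}=(A^!)^*$, obtained by dualising the $\NA_{2,N}$-algebra structure of Theorem~\ref{thm:presentationKDinfty} and then forgetting down to $\A_{2,N}$ (using the inclusion $\NA_{2,N}\subset\A_{2,N}$ of operads, or rather that any $\NA_{2,N}$-algebra is an $\A_{2,N}$-algebra), has $\delta_2$ supported on $A^{\ac}_0$-type terms that contribute nothing in low weight and $\delta_N|_{A^{\ac}_N}$ equal to the canonical inclusion up to a sign. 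Since the degree of $\delta_N$ is $N-2$ and $A^{\ac}_N$ has homological degree~$2$ while $(A^{\ac}_1)^{\otimes N}$ has homological degree $N$, the degree matches, so no contribution is lost.

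Once these identifications are in place the verification is the routine computation the authors refer to; I would present it as: evaluate $\star_2(\kappa,\kappa)$ and $\star_N(\kappa,\ldots,\kappa)$ on a homogeneous element $x\in A^{\ac}$, observe both vanish unless $x$ has weight $2$ (resp. $N$), note $A^{\ac}_2=0$, and identify the weight-$N$ computation with the composite $R\hookrightarrow V^{\otimes N}\xrightarrow{\text{mult}}A$, which is zero by definition of $A=T(V)/(R)$. This also matches the classical fact, for $N=2$, that the canonical twisting morphism $A^{\ac}\to A$ dual to the quadratic data is Koszul-theoretic in origin, so the present statement is the expected $N$-homogeneous analogue.
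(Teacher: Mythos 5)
Your proposal is correct and follows essentially the same route as the paper's proof: $\star_2(\kappa,\kappa)=0$ because $A^{\ac}_2=0$, the only potentially nonzero component of $\star_N(\kappa,\ldots,\kappa)$ is the composite $A^{\ac}_N=R\hookrightarrow V^{\otimes N}\xrightarrow{\mu^{(N-1)}}A$, which vanishes since $R$ is the space of relations, and all remaining components vanish for weight reasons. The extra care you take with the duality and sign conventions for $\delta_N$ is reasonable but not needed beyond what the paper records.
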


\begin{proof}
Since $A^{\ac}_2=0$, we automatically have $\star_2( \kappa,  \kappa)=0$.
The image of $A^{\ac}_N=R$ under $\star_N( \kappa, \ldots,  \kappa)$  is equal to zero, since the following composite vanishes 
$$A^{\ac}_N=R \xrightarrow{\delta_N} V^{\otimes N} \xrightarrow{\mu^{N-1}} A \ . $$
For weight grading reasons, the other components of 
$\star_2( \kappa,  \kappa)+ \star_N( \kappa, \ldots  ,  \kappa)$ vanish, which concludes the proof. 
\end{proof}

The twisting morphism $\kappa$ can be used to explain the particular form of the Koszul complex introduced by Berger in~\cite{Berger01}. 

\begin{proposition}
The twisted tensor product $A^{\ac}\otimes_\kappa A:=(A^{\ac}\otimes A, d_\kappa)$ coincides with the \emph{Koszul complex} described in Section~\ref{subsec:N-Koszul}.
\end{proposition}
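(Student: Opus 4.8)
The plan is to unwind both the differential $d_\kappa = d_2 + d_N$ on $A^{\ac}\otimes A$ and the Berger boundary maps, and check they agree term by term, degree by degree. Recall from Section~\ref{subsec:N-Koszul} that $A^{\ac}$ is concentrated in weights $\equiv 0,1 \pmod N$, with $A^{\ac}_{kN}$ in homological degree $2k$ and $A^{\ac}_{kN+1}$ in homological degree $2k+1$; tensoring with $A$ (in degree $0$) does not change this. So the only possibly nonzero components of $d_\kappa$ lower homological degree by one, and these are exactly the two families of maps $A^{\ac}_{kN}\otimes A \to A^{\ac}_{(k-1)N+1}\otimes A$ and $A^{\ac}_{kN+1}\otimes A\to A^{\ac}_{kN}\otimes A$ appearing in Berger's definition. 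First I would observe that $d_2$, which uses $\delta_2$ and a single application of $\kappa$ and $\mu$, can only be nonzero when $\delta_2$ produces a tensor factor landing in $A^{\ac}_1$; given the weight constraints on $A^{\ac}$, this forces the source to be $A^{\ac}_{kN+1}$ and $\delta_2$ to split off $A^{\ac}_1$ from $A^{\ac}_{kN}$, so $d_2$ realises precisely the second Berger map. Symmetrically, $d_N$ uses $\delta_N$ and $N-1$ applications of $\kappa$; for weight reasons $\delta_N$ must split $A^{\ac}_{kN}$ as $A^{\ac}_{(k-1)N+1}\otimes (A^{\ac}_1)^{\otimes(N-1)}$, and the $N-1$ copies of $\kappa$ identify those $A^{\ac}_1$ factors with $V = A_1$, after which $\mu^{N-1}$ multiplies them into the $A$-factor; this is exactly the first Berger map.

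The second step is to verify that the coproduct maps used in Berger's definition really are the (iterated) components of the $\A_{2,N}$-coalgebra structure $\delta_2,\delta_N$ on $A^{\ac}$ dual to the $\NA_{2,N}$-algebra structure of Theorem~\ref{thm:presentationKDinfty}. Here one uses that $A^{\ac} = (A^!)^*$ and that, under the presentation of $A^!$ as a quotient of $\mathop{\mathrm{Free}}\nolimits_{\NA_{2,N}}(V^*)$, the operation $\mu_2$ dualises to the "split off one generator" coproduct $A^{\ac}_{kN+1}\to A^{\ac}_{kN}\otimes A^{\ac}_1$ and $\mu_N$ dualises to the coproduct $A^{\ac}_{kN}\to A^{\ac}_{(k-1)N+1}\otimes (A^{\ac}_1)^{\otimes(N-1)}$. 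The identification $A^{\ac}_1 = (V^*)^* \cong V = A_1$ is the one already fixed in the excerpt. Concretely, the bulk of this step is checking that under linear duality the comultiplications have the shapes Berger uses and that the signs introduced by the suspension/desuspension conventions and the pre-Lie formula for $\star$ match the signs implicit in Berger's complex (which, in the form quoted here, carries no explicit signs because the homological degrees of all the relevant pieces of $A^{\ac}$ are even or the maps are applied only once).

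The main obstacle I expect is precisely this bookkeeping of signs and of the Koszul-sign conventions: the twisted tensor product differential $d_\alpha$ in the excerpt is defined via $\delta_2\otimes A$, $C\otimes\alpha\otimes A$, etc., with the symmetry isomorphism $\sigma(v\otimes w)=(-1)^{|v||w|}w\otimes v$ baked in, while Berger's original complex is written without any signs and with the homological degrees of $A$ declared to be $0$. One must confirm that, because $\kappa$ has degree $-1$ and is applied once in $d_2$ and $N-1$ times in $d_N$, and because $A$ sits in degree $0$ so no symmetry braidings across $A$-factors contribute signs, the accumulated signs are trivial (or are absorbed by the degree conventions fixed after the definition of $A^!$). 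Once the shapes and the signs agree, the equality of the two chain complexes is immediate, since both are built on the same underlying graded vector space $A^{\ac}\otimes A$ and we have matched their differentials component by component. I would therefore conclude by stating that $d_\kappa = d_2 + d_N$ reproduces exactly the two composites defining the Berger boundary, which finishes the proof.
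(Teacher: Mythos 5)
Your proposal is correct and matches the paper's approach: the paper dismisses this proposition with ``by straightforward computation,'' and your argument is precisely that computation carried out --- using the weight constraints on $A^{\ac}$ (concentrated in weights $\equiv 0,1\pmod N$) to show that $d_2$ survives only on $A^{\ac}_{kN+1}\otimes A$ and $d_N$ only on $A^{\ac}_{kN}\otimes A$, reproducing exactly Berger's two boundary maps. Your attention to the sign bookkeeping and to the duality between the $\NA_{2,N}$-structure and the coproducts is more detail than the paper provides, not less.
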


\begin{proof}
By straightforward computation.
\end{proof}

\begin{theorem}\label{thm:KDthm}
The morphism of dg algebras 
$ g_\kappa\colon \Omega_\infty A^{\ac} \to A $
is a quasi-isomorphism if and only if the Koszul complex 
$A^{\ac} \otimes_\kappa A$ is acyclic.
\end{theorem}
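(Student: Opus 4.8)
The plan is to deduce Theorem~\ref{thm:KDthm} directly from the general Theorem~\ref{thm:FunThmKosMor}, applied to the $\A_{2,N}$-coalgebra $C=A^{\ac}$, the algebra $A$, and the twisting morphism $\alpha=\kappa$. Once this reduction is in place, only the connectedness hypothesis of Theorem~\ref{thm:FunThmKosMor} has to be verified, and the identification of $C\otimes_\kappa A$ with the Koszul complex is already established in the preceding proposition.

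First I would record that $\kappa\in\Tw(A^{\ac},A)$ by the lemma just proved, so that $g_\kappa\colon\Omega_\infty A^{\ac}\to A$ is a bona fide morphism of dg algebras lying in $\Hom_{\mathsf{dg}\ \mathsf{alg}}(\Omega_\infty A^{\ac},A)$; this places us in the bottom row of the diagram of Theorem~\ref{thm:FunThmKosMor}. Next I would check the connectedness assumptions: the algebra $A=T(V)/(R)$ is connected with respect to the weight grading since it is non-negatively weight-graded with $A_0=\KK$ and generated in weight~$1$, and the coalgebra $A^{\ac}=(A^!)^*$ is connected because $A^!$ is non-negatively weight-graded with $A^!_0=\KK$ and $A^{\ac}_1\cong V$, while by construction $A^{\ac}$ has nontrivial components only in weights $\equiv 0,1\pmod N$, so in particular $A^{\ac}_0=\KK$ and $A^{\ac}$ has no component dual to an infinite-dimensional space. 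Hence both hypotheses of Theorem~\ref{thm:FunThmKosMor} are satisfied.

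Then I would invoke the equivalence $\mathrm{QI}_{\mathsf{dg}\ \mathsf{alg}}(\Omega_\infty A^{\ac},A)\cong\mathrm{Kos}(A^{\ac},A)$ from Theorem~\ref{thm:FunThmKosMor}: under the correspondence $g_\alpha\leftrightarrow\alpha$, the morphism $g_\kappa$ is a quasi-isomorphism if and only if $\kappa$ is a Koszul morphism, i.e. if and only if the twisted tensor product $A^{\ac}\otimes_\kappa A$ is acyclic. Finally, by the proposition identifying $A^{\ac}\otimes_\kappa A$ with Berger's Koszul complex, this last condition is exactly the acyclicity of the Koszul complex, which gives the statement.

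I do not expect a genuine obstacle here: the substance of the argument has been front-loaded into Theorem~\ref{thm:FunThmKosMor} and into the two preceding lemmas/propositions about $\kappa$. The only point requiring a small amount of care is the verification that $A^{\ac}$ meets the connectedness requirement of Theorem~\ref{thm:FunThmKosMor} in the precise sense used there (weight-connected, with the weight-zero part the ground field), which follows from the explicit description of $A^!$ and $A^{\ac}$ recalled in Section~\ref{subsec:N-Koszul}; everything else is a direct citation.
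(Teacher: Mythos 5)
Your argument is exactly the paper's: the authors prove Theorem~\ref{thm:KDthm} as a direct corollary of Theorem~\ref{thm:FunThmKosMor} applied to $C=A^{\ac}$ and $\alpha=\kappa$, with the identification of $A^{\ac}\otimes_\kappa A$ with Berger's Koszul complex supplied by the preceding proposition. Your additional verification of the connectedness hypotheses is a correct and welcome elaboration of a step the paper leaves implicit.
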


\begin{proof}
This is a direct corollary of Theorem~\ref{thm:FunThmKosMor}.
\end{proof}

This theorem demonstrates that the Koszul dual $\A_{2,N}$-coalgebra construction of Theorem~\ref{thm:presentationKDinfty} provides a good candidate to get  a ``small'' quasi-free algebra resolution of an $N$-homogeneous algebra $A$ and to get  an equally ``small'' free $A$-module resolution of the ground field $\KK$. In the Koszul duality theory, this latter resolution is used to compute $\Tor$ and $\Ext$ functors, see \cite{Priddy70} and to establish equivalence between derived categories, see, e.~g., \cite{BGS88} and~\cite{Keller03}. The data of a quasi-free algebra resolution is used in the Homotopy Transfer Theorem \cite[Section~$10.3$]{LodayVallette10}. 

\bibliographystyle{amsalpha}

\begin{thebibliography}{xxxx}

\bibitem[BGS88]{BGS88} Alexander Beilinson, Victor Ginsburg, and Vadim Schechtman, \emph{Koszul duality}, J. Geom. Phys. \textbf{5} (1988), no.~3, 317--350.

\bibitem[BM09]{BM09} Clemens Berger and Ieke Moerdijk, \emph{On the derived category of an algebra over an operad}, Georgian Math. J. \textbf{16} (2009), no.~1, 13--28.

\bibitem[B01]{Berger01} Roland Berger, \emph{Koszulity for nonquadratic algebras}, J. Algebra \textbf{239} (2001), no.~2, 705--734. 

\bibitem[BDVW03]{BDVW03} Roland Berger, Michel Dubois-Violette, Marc Wambst, \emph{Homogeneous algebras}, J. Algebra \textbf{261} (2003), no.~1, 172--185. 

\bibitem[B78]{Bergman78}George Bergman, \emph{The diamond lemma for ring theory}, Adv. in Math. \textbf{29} (1978), no. 2, 178--218. 

\bibitem[B76]{Bokut76} Leonid A.~Bokut, \emph{Imbeddings into simple associative algebras}, Algebra i Logika \textbf{15} (1976), 117--142.

\bibitem[B65]{Buchberger65} Bruno Buchberger, \emph{An algorithm for finding a basis for the residue class ring of a zero-dimensional polynomial ideal}, Ph.D. thesis, University of Innsbruck, 1965 (German), Journal of Symbolic Computation, \textbf{41} (2006), 471--511.

\bibitem[CG11]{CG11} Andrew Conner and Pete Goetz, \emph{$A$-infinity algebra structures associated to $\mathcal{K}_2$ algebras}, J. Algebra \textbf{337} (2011), no.~1, 63--81.

\bibitem[DK10]{DK10} Vladimir Dotsenko and Anton Khoroshkin, \emph{Gr\"obner bases for operads}, Duke Math. J. \textbf{153} (2010), no.~2, 363--396.

\bibitem[G06]{Ger06} Lothar Gerritzen, \emph{Tree polynomials and non-associative Gr\"obner bases}, J.~Symb. Comp., \textbf{41} (2006), 297--316.

\bibitem[GM11]{GM11} Edward L.~Green and E.~L.~Marcos, \emph{$d$-Koszul algebras, $2$-$d$-determined algebras and $2$-$d$-Koszul algebras}, J. Pure Appl. Algebra \textbf{215} (2011), no.~4, 439--449. 

\bibitem[HL05]{HeLu05} Ji Wei He and Di Ming Lu, \emph{Higher {K}oszul algebras and {$A$}-infinity   algebras}, J. Algebra \textbf{293} (2005), no.~2, 335--362. 

\bibitem[H10]{Hof10} Eric Hoffbeck, \emph{A Poincar\'e--Birkhoff--Witt criterion for Koszul operads}, Manuscripta Math. \textbf{131} (2010), no. 1-2, 87--110. 

\bibitem[HMS74]{HMS74} Dale Husemoller, John C. Moore, and James Stasheff, \emph{Differential homological algebra and homogeneous spaces}, J. Pure Appl. Algebra \textbf{5} (1974), 113--185.

\bibitem[K01]{Keller01} Bernhard Keller, \emph{Introduction to A-infinity algebras and modules}, Homology, Homotopy and Applications \textbf{3} (2001), 1--35. 

\bibitem[K03]{Keller03} Bernhard Keller, \emph{Koszul duality and coderived categories (after K. Lef\`evre)}, preprint available at \url{http://www.math.jussieu.fr/~keller/publ/kdc.pdf}, 2003. 

\bibitem[K50]{Koszul50}Jean--Louis Koszul, \emph{Homologie et cohomologie des alg\`ebres de Lie}, Bulletin de la Soci\'et\'e Math\'ematique de France \textbf{78} (1950), 65--127.

\bibitem[LV10]{LodayVallette10} Jean-Louis Loday and Bruno Vallette, \emph{Algebraic operads}, Grundlehren der mathematischen Wissen\-schaf\-ten, Volume 346, Springer-Verlag (2012).

\bibitem[LHL07]{LHL} Jia Feng L\"u, Ji Wei He, and Di Ming Lu, \emph{Piecewise-Koszul algebras}, Science in China, Series A: Mathematics \textbf{50} (2007), No. 12, 1795--1804.

\bibitem[LPWZ04]{LPWZ04} Di Ming Lu, John H.~Palmieri, Quan Shui Wu, and James J.~Zhang, \emph{$A$-infinity algebras for ring theorists}, In: ``Proceedings of the International Conference on Algebra'',  
Algebra Colloq. \textbf{11} (2004), no.~1, 91--128. 

\bibitem[LPWZ09]{LPWZ09} Di Ming Lu, John H.~Palmieri, Quan Shui Wu, and James J.~Zhang, \emph{$A$-infinity structure on Ext-algebras}, J. Pure Appl. Algebra \textbf{213} (2009), no.~11, 2017--2037. 

\bibitem[P70]{Priddy70} Stewart~B. Priddy, Koszul resolutions, \emph{Trans. Amer. Math. Soc.} \textbf{152} (1970), 39--60.

\bibitem[P86]{Proute86} Alain Prout\'e, \emph{${A}_\infty$-structures, mod\`ele minimal de {B}aues-{L}emaire et homologie des fibrations}, Ph.~D. Thesis (1986). Reprints in Theory and Applications of Categories, No. 21, 2011, pp. 1Ð99.

\bibitem[Q67]{Quillen67} Daniel~G. Quillen, \emph{Homotopical algebra}, Lecture Notes in Mathematics, No. 43, Springer-Verlag, Berlin, 1967.

\bibitem[RS10]{RS} Andrea Rey and Andrea Solotar, \emph{$(a,b)$-Koszul algebras}, Preprint~\texttt{arXiv:1007.3426}.

\bibitem[T57]{Tate57} John Tate, \emph{Homology of Noetherian rings and local rings}, Illinois Journal of Mathematics \textbf{1} (1957), 14--27.

\bibitem[YZ03]{YuZhang2003} Yu Ye, Pu Zhang, \emph{Higher Koszul complexes}, Sci. China Ser. A \textbf{46} (2003), no.~1, 118--128. 

\end{thebibliography}

\def\cprime{$'$}
\providecommand{\bysame}{\leavevmode\hbox to3em{\hrulefill}\thinspace}
\providecommand{\href}[2]{#2}

\end{document}